\documentclass[12pt, reqno]{amsart}
\usepackage{amsmath, amsthm, amscd, amsfonts, amssymb, graphicx, xcolor}
\usepackage[bookmarksnumbered, colorlinks, plainpages]{hyperref}
\usepackage{amsmath}
\usepackage{cases}
\usepackage{amssymb}
\usepackage{mathrsfs}
\usepackage{euscript}
\usepackage{dsfont}
\usepackage{graphicx}
\usepackage{float}
\allowdisplaybreaks[4]
\newtheorem{theorem}{Theorem}[section]

\newtheorem{lemma}{Lemma}[section]
\newtheorem{proposition}[theorem]{Proposition}

\theoremstyle{definition}

\newtheorem{conjecture}[theorem]{Conjecture}

\theoremstyle{remark}
\newtheorem{remark}[theorem]{Remark}
\numberwithin{equation}{section}

\begin{document}
	\setcounter{page}{1}
	

	
	
	\title{Note on the largest prime factors of consecutive integers}

	\author[]{Zhiyuan Yang}
	
	\address{School of Mathematics,  Shandong University, Jinan 250100, Shandong, China}
	\email{\tt zhiyuan.yang@mail.sdu.edu.cn}
	
	
	

	
	
	\begin{abstract}
		In this paper, we give an alternative proof of Tao-Teräväinen's results by showing
		\begin{align*}
			&\#\{n\leq x:P^+(n-1)<P^+(n)<P^+(n+1)>\max_iP^+(n+a_i)\}\gg x,\\&\#\{n\leq x:P^+(n-1)>P^+(n)<P^+(n+1)>\max_iP^+(n+a_i)\}\gg x,
		\end{align*}
		where the $a_i$ are at most finite in number, with $|a_i| = O(1)$. We also obtain some comparison results for five numbers under the Elliott-Halberstam conjecture for friable integers. For any $\varepsilon>0$, we also show
		\begin{align*}
			&\#\left\{n\leq x:1\leq \frac{P^+(n+1)}{P^+(n)}<1+\varepsilon\right\}\gg_\varepsilon \frac{x}{\log x},
		\end{align*}
		which improves a previous result $\gg x(\log x)^{-50}$ due to S. Kim (2020).
		\newline
		\newline
		\noindent \textit{Keywords.}  sieve method, the largest prime factor
		\newline
		\noindent 
	\end{abstract} \maketitle
	

	\section{Introduction}
	
	Let $P^+(n)$ denote the largest prime factor of $n$ with the convention that $P^+(1)=1$. In the 1930s, Erd\H{o}s and Turán [14] formulated the following conjecture in a correspondence.
	\begin{conjecture}
		(Erd\H{o}s-Turán) For $x\rightarrow\infty$, we have
		\begin{align*}
			\#\{n\leq x:P^+(n)<P^+(n+1)\}\sim \frac{1}{2}x.
		\end{align*}
	\end{conjecture}
	In 1978, Erd\H{o}s and Pomerance [4] first proved that there exists a positive asymptotic density of integers $n$ such that $P^+(n)<P^+(n+1)$. In fact, they showed that\begin{align*}
		\#\{n\leq x:P^+(n)<P^+(n+1)\}>0.0099x.
	\end{align*}
	The best published lower constant is 0.2017 due to Lü-Wang [12]. Very recently, the author's preprints [18, 19] showed that the asymptotic density is larger than 0.280 and 0.299. See [12] for more historical background.
	
	We now consider the case of comparing more than two numbers. In 2011, De Koninck and Doyon [2] formulated a more general conjecture.
	\begin{conjecture}
		(De Koninck-Doyon) Fix an arbitrary integer $k\geq 2$ and let $n$ be a large number. Let $a_1,a_2,\cdots,a_k$ be any permutation of the numbers $0,1,\cdots,k-1$. Then we have\begin{align*}
			\#\{n\leq x:P^+(n+a_1)<P^+(n+a_2)<\cdots<P^+(n+a_k)\}\sim\frac{1}{k!}x.
		\end{align*}
	\end{conjecture}
	There has also been some recent progress for this type of comparison problem. 
	In 1978, Erd\H{o}s and Pomerance [4] observed that
	the patterns
	\begin{align*}
		P^+(n-1)>P^+(n)<P^+(n+1),\ \text{or}\ P^+(n-1)<P^+(n)>P^+(n+1)
	\end{align*}
	and 
	\begin{align*}
		P^+(n-1)<P^+(n)<P^+(n+1)
	\end{align*}
	both occur infinitely often and conjectured that these two patterns occur for a
	positive density of $n$. In addition, they proved that there are infinitely many such integers. In 2001, Balog [1] proved the following lower bound
	\begin{align*}
		\#\{n\leq x:P^+(n-1)<P^+(n)<P^+(n+1)\}\gg x^{1/2}.
	\end{align*}
	Later, in 2018, Wang [16] obtained two positive patterns by showing
	\begin{align*}
		&\#\{n\leq x:P^+(n-1)>P^+(n)<P^+(n+1)\}\gg x,\\&\#\{n\leq x:P^+(n-1)<P^+(n)>P^+(n+1)\}\gg x.
	\end{align*}
	For more difficult cases, Tao and Teräväinen [15] succeeded in obtaining a positive
	density by showing
	\begin{align*}
		&\#\{n\leq x:P^+(n-1)<P^+(n)<P^+(n+1)>P^+(n+2)\}\gg x,\\&\#\{n\leq x:P^+(n-1)>P^+(n)>P^+(n+1)<P^+(n+2)\}\gg x.
	\end{align*}
	However, Tao-Teräväinen's proof is very complicated.
	In particular, using estimates on the numbers of prime $k$-tuples,
	Kim [10] proved the following result
	\begin{align*}
		\#\{n\leq x:P^+(n-1)<P^+(n)<P^+(n+1)\}\gg \frac{x}{(\log x)^{50}}.
	\end{align*}
	
	In this paper, motivated by [3], we first give an alternative proof of Tao-Teräväinen's results.
	\begin{theorem}
		For sufficiently large $x$, we have
		\begin{align*}
			&\#\{n\leq x:P^+(n-1)<P^+(n)<P^+(n+1)>\max_iP^+(n+a_i)\}\gg x,\\&\#\{n\leq x:P^+(n-1)>P^+(n)<P^+(n+1)>\max_iP^+(n+a_i)\}\gg x,
		\end{align*}
		where the $a_i$ are at most finite in number, with $|a_i| = O(1)$.
	\end{theorem}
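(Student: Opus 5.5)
The plan is to force $P^+(n+1)$ to be a very large prime, so that the condition $P^+(n+1)>\max_i P^+(n+a_i)$ becomes automatic. The comparison between $P^+(n-1)$ and $P^+(n)$ is then arranged by a second, independent mechanism, following the idea of [3].

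\textbf{Setup.} Work with $n$ of the shape $n+1=pm$, where $p$ is a prime with $p>x^{1-\delta}$ for a small fixed $\delta>0$. Then $m\le x^{\delta}$, so $P^+(n+1)=p$. For every fixed $a_i\neq 1$ we have $|a_i-1|=O(1)$, so $p$ cannot divide $n+a_i$ once $p$ is large. Hence $P^+(n+a_i)\le p$ is possible only if $P^+(n+a_i)<p$. Consequently, for such $n$ the condition $P^+(n+1)>P^+(n)$ and $P^+(n+1)>\max_iP^+(n+a_i)$ is equivalent to $P^+(n+a_i)<p$ and $P^+(n)<p$. Because $p>x^{1-\delta}$, this can only fail if $n+a_i$ (or $n$) equals $q\cdot r$ with a prime $q>p$ and $r\le x^{\delta}$.

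\textbf{Counting the good $n$.} The number of $n\le x$ with $n+1=pm$, $p>x^{1-\delta}$, is about $x\log\frac{1}{1-\delta}\asymp \delta x$. The number of $n$ for which both $n+1$ and $n+a$ have a prime factor exceeding $x^{1-\delta}$ is bounded via an upper-bound sieve (Brun–Titchmarsh/Selberg). Writing $n+1=pm$ and $n+a=qr$ with $m,r\le x^{\delta}$, one counts primes $p$ with $(pm-1+a)/r$ prime in a progression. This gives $O(\delta^2 x)$ for each of the finitely many $a_i$, together with $a=0$. So for small $\delta$ a positive proportion, $\gg \delta x$, of the $n$ with $n+1=pm$ satisfy $P^+(n+1)>P^+(n)$ and $P^+(n+1)>\max_i P^+(n+a_i)$.

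\textbf{Ordering $P^+(n-1)$ and $P^+(n)$.} It remains to impose $P^+(n-1)<P^+(n)$, or the reverse, for a positive proportion of these $n$. Here I would restrict $m$ to a fixed residue class and use the scaling trick of Erd\H{o}s–Pomerance/Wang. Consider the pairs $n-1=pm-2$ and $n=pm-1$ as $p$ varies over primes up to $x/m$. By Bombieri–Vinogradov together with the known distribution of $P^+$ of shifted primes, the distribution of $(\log P^+(pm-2)/\log x,\ \log P^+(pm-1)/\log x)$ approaches a product of Dickman-type laws on average over $m$. The precise input I would try to use is a positive-proportion result for $P^+(p-1)$ on both sides of $x^{c}$, via Bombieri–Vinogradov and Fouvry-type level of distribution. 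If full independence is unavailable, I would instead count the events $P^+(pm-1)>x^{\theta}$ and $P^+(pm-2)<x^{\theta}$, each with positive probability. The first is a lower bound obtained by subtracting the smooth shifted primes via an upper sieve; the second is a lower bound on smooth shifted primes, which is known at positive density for suitable $\theta$ (Baker–Harman-type results). The intersection is then handled by swapping the roles of the two events. Each of the two patterns then gets density $\gg\delta x$, and intersecting with the previous step costs only $O(\delta^2x)$.

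\textbf{Main obstacle.} The main difficulty is this last step: getting a \emph{joint} positive-density statement for the relative order of $P^+(pm-2)$ and $P^+(pm-1)$ along shifted primes, uniformly enough in $m\le x^{\delta}$. The sieve errors must stay $o(\delta)$ relative to the main term. I expect this to require a weighted sieve with level $x^{1/2-\epsilon}$ and a careful choice of $\theta$.
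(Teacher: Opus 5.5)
Your first two steps are correct and match the paper's, which places the large prime at $n$ instead of $n+1$. Restricting to $P^+(n+1)>x^{1-\delta}$ leaves $\asymp\delta x$ integers. Discarding those for which $n$ or some $n+a_i$ also has a prime factor $>x^{1-\delta}$ costs only $O(\delta^2x)$ by an upper-bound sieve.

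The gap is the third step, which is the actual content of the theorem. You need \emph{each} ordering of $P^+(n-1)$ and $P^+(n)$ to occur for at least $c_0\delta x$ of these $n$, with $c_0$ independent of $\delta$. Nothing you propose gives this.
\begin{itemize}
\item The joint law of $\bigl(\log P^+(pm-2),\log P^+(pm-1)\bigr)/\log x$ along primes $p$ is not known; not even the marginal law of $P^+(p-1)$ is.
\item The fallback fails structurally. Positive proportions for $A=\{P^+(pm-1)>x^{\theta}\}$ and $B=\{P^+(pm-2)<x^{\theta}\}$ separately say nothing about $A\cap B$ unless they add up to more than $1$. One cannot expect that, since the two shifts should have the same distribution. ``Swapping the roles'' just produces another intersection of the same kind.
\item No marginal information can suffice. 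Equal marginals are compatible with one ordering holding for all but an arbitrarily small proportion of $n$, the two values being close most of the time.
\end{itemize}
What is needed is joint control of how often $P^+(n-1)$ and $P^+(n)$ are close.

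The paper supplies this with a first-moment balancing argument plus anti-concentration, following [3].
\begin{itemize}
\item \emph{Balance.} Set $U_n=\mathbf{1}_{P^+(n-1)>x^{1/2}}\log P^+(n-1)$, and define $V_n$ likewise for the other integer being compared. An integer $\le x$ has at most one prime factor $>x^{1/2}$, so $U_n=\sum_{p'\mid n-1,\ p'>x^{1/2}}\log p'$. Summing over $n$ carrying a prime $p>x^{1-c}$ therefore reduces to counting $lp$ in residue classes modulo $p'$. Lemma 2.4, derived from the generalized Bombieri--Vinogradov theorem, gives the same main term for both classes, whence $\sum(U_n-V_n)=o(x)$.
\item \emph{Anti-concentration.} Proposition 3.1 (an upper-bound sieve) shows that the $n$ for which both values exceed $x^{1/2}$ and their ratio lies in $(x^{-\delta},x^{\delta})$ number at most $\varepsilon cx$, with $\delta$ independent of $c$.
\item \emph{Conclusion.} Outside this set, every $n$ with one ordering contributes at least $\delta\log x$ to $\sum(U_n-V_n)$. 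Every $n$ with the other ordering contributes at most $\log x$ in absolute value. The balance therefore forces the second ordering to occur for $\gg_\delta cx$ integers, with implied constant independent of $c$, which beats the $C_2c^2x$ loss. Exchanging $U$ and $V$ handles the other ordering.
\end{itemize}

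The paper runs this for the two neighbours $n\pm1$ of the integer carrying the large prime. For your configuration (the pair $n-1,n$ next to $n+1$), the same two inputs should carry over, with the residues $\pm1$ in Lemma 2.4 replaced by $1,2$. That argument, not a joint Dickman law for shifted primes, is what would complete your third step.
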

	\begin{remark}
		The result we establish in Theorem 1.3 may be stronger than Tao-Teräväinen’s. However, in [15], they do not give the type of the implied constant.
	\end{remark}
	
	Our second theorem requires the Elliott-Halberstam conjecture for friable integers, which means that for any $\varepsilon$ and any $A>0$, one has
	\begin{align*}
		\sum_{q\leq x^{1-\varepsilon}}\max_{x'\leq x}\max_{(a,q)=1}\Bigg|\sum_{\substack{n\leq x',P^+(n)\leq y\\n\equiv a\mkern-15mu\pmod{q}}}1-\frac{1}{\varphi(q)}\sum_{\substack{n\leq x',P^+(n)\leq y\\(n,q)=1}}1\Bigg|\ll_A\frac{x}{(\log x)^A},
	\end{align*}
	where $x^\varepsilon<y\leq x$.
	\begin{theorem}
		Under the Elliott-Halberstam conjecture for friable integers, we have
		\begin{align*}
			&\#\{n\leq x:P^+(n-1)>P^+(n)<P^+(n+1)<P^+(n+2)>P^+(n+3)\}\gg x,\\&\#\{n\leq x:P^+(n-1)<P^+(n)>P^+(n+1)>P^+(n+2)<P^+(n+3)\}\gg x,\\&\#\{n\leq x:P^+(n-1)>P^+(n)<P^+(n+1)>P^+(n+2)<P^+(n+3)\}\gg x,\\&\#\{n\leq x:P^+(n-1)<P^+(n)>P^+(n+1)<P^+(n+2)>P^+(n+3)\}\gg x.
		\end{align*}
	\end{theorem}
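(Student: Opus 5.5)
We prove Theorem 1.4 by a weighted counting argument in the style of [3] and [16]. We force one of the five numbers $n-1,\dots,n+3$ to be a prime times a small cofactor, so that its largest prime factor is huge. We force the neighbours of that number to be friable, so that their largest prime factors are controlled. The remaining comparisons then become statements about the joint distribution of $P^+$ of friable integers, and the Elliott--Halberstam hypothesis for friable integers makes these computable.

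Take the first pattern, $P^+(n-1)>P^+(n)<P^+(n+1)<P^+(n+2)>P^+(n+3)$; the other three are handled the same way by relabelling and by the symmetry $n\mapsto -n$. The peak is at $n+2$. We count $n\le x$ with $n+2=mp$, where $p$ is prime and $m\le x^{\delta}$ lies in a fixed residue class chosen to make $n-1,\dots,n+3$ coprime to small primes in a suitable way. Then $P^+(n+2)=p\ge x^{1-\delta}$, and this exceeds both $P^+(n+1)$ and $P^+(n+3)$ once we also impose $P^+(n+1),P^+(n+3)\le x^{1-2\delta}$; the friability constraint has density bounded below and can be enforced by a lower-bound sieve on the linear forms. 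The second peak, $P^+(n-1)>P^+(n)$, is obtained in the same spirit: we require $n-1$ to have a prime factor $q\in(x^{\beta},x^{\beta+\eta}]$ with $(n-1)/q$ being $x^{\beta-\eta}$-friable, and $n$ to be $x^{\gamma}$-friable with $\gamma<\beta$. We also require $P^+(n)<P^+(n+1)$, for which it suffices that $n+1$ has a prime factor exceeding $x^{\gamma}$. Writing the count as
\begin{align*}
\sum_{\substack{m\le x^{\delta}}}\ \sum_{\substack{q\sim x^{\beta}}}\ \#\{p:\ mp-2\equiv 1 \ (\mathrm{mod}\ q),\ P^+(mp-2)\le x^{\gamma},\ \ldots\},
\end{align*}
the inner conditions on $mp-2$ and $mp-1$ become friable-in-progression conditions. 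We then apply the friable Elliott--Halberstam conjecture with moduli $mq\le x^{1-\varepsilon}$. This is possible because $\beta+\delta<1$, and it is exactly where the hypothesis is needed: unconditionally, Bombieri--Vinogradov-type level $1/2$ is insufficient with three simultaneous constraints. Switching the roles of $p$ and the friable variable (counting primes $p$ with $mp-2$ friable, i.e.\ shifted-prime friability) then leads to a main term of the form $x\cdot F(\beta,\gamma,\delta)$, with $F$ an explicit integral of Dickman-type densities.

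The main obstacle is showing that $F(\beta,\gamma,\delta)>0$ for some admissible choice of parameters, once the losses from the sieve upper bounds are subtracted. Those losses come from the events where the friable integers fail the ordering, for instance $P^+(n+1)>P^+(n+2)$ or $P^+(n-1)<P^+(n)$. We handle them by taking $\gamma$ small relative to $\beta$, so that $\rho(1/\gamma)$-type densities stay positive, and $\delta$ small, so that the prime $p$ dominates. Each bad event is then bounded by an upper-bound sieve term that is a small multiple of the main term. We would first try $\delta=\varepsilon$, $\beta=1/2$, $\gamma=1/4$. If the constants do not close, we would instead use the more flexible weighting of [3]: sum over all factorizations $n+2=mp$ with a weight in $\log p/\log x$, rather than restricting $m$, to gain positivity.
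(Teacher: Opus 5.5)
\textbf{Gap: the main term.} You need a lower bound for the number of $n\le x$ satisfying three positive conditions at once:
\begin{itemize}
\item $n+2=mp$ with $p\ge x^{1-\delta}$ prime;
\item $q\mid n-1$ for a prime $q\in(x^{\beta},x^{\beta+\eta}]$, $\beta=1/2$;
\item $P^+(n)\le x^{\gamma}$.
\end{itemize}
The hypothesis only says that the indicator of $y$-friable integers is equidistributed in residue classes to moduli up to $x^{1-\varepsilon}$. So it can handle one friability condition plus divisibility conditions of total modulus below $x^{1-\varepsilon}$. For example, $P^+(n)\le x^{\gamma}$ with $p\mid n+2$, summed over $p\in(x^{1-\delta},x^{1-\varepsilon}]$, is fine. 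Adding $q\mid n-1$ raises the modulus to $pq\ge x^{3/2-\delta}>x$. Then each residue class contains at most one $n\le x$, and equidistribution is meaningless.

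Your way around this is to count primes $p\equiv 3m^{-1}\pmod q$ with $mp-2$ friable. That is a statement about friable values of shifted primes in progressions, and it does not follow from the hypothesis. In terms of the friable variable $N=mp-2$, it asks you to detect that $(N+2)/m$ is prime. A level of distribution for friable integers, however close to $x$, cannot detect primes; this is the parity obstruction. So the step producing $x\,F(\beta,\gamma,\delta)$ is unjustified, and the argument breaks there.

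\textbf{Missing idea: the left peak comes for free.} The paper's proof is shifted by one; in your indexing it imposes only $P^+(n)\le x^{\alpha}$ and $P^+(n+2)>x^{\beta}$, with $\alpha$ small and $\beta$ close to $1$. Summing over the large prime as the modulus, the hypothesis gives
\[
\#\{n\le x:\ P^+(n)\le x^{\alpha},\ P^+(n+2)>x^{\beta}\}\sim\rho(1/\alpha)\bigl(1-\rho(1/\beta)\bigr)x .
\]
This involves one friable variable and one modulus. The whole pattern then holds unless one of two rare events occurs:
\begin{itemize}
\item $n+1$ or $n+3$ also has a prime factor $\ge x^{\beta}$; Selberg's sieve bounds this by $\ll(1-\beta)^2\rho(1/\alpha)x$.
\item $n-1$ or $n+1$ is also $x^{\alpha}$-friable. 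For that neighbour $m$, replace $1$ by $(\log x)^{-1}\sum_{d\mid m}\Lambda(d)$, which is harmless for $m>x/\log x$. Only a prime $p'\le x^{\alpha}$ dividing $m$ remains, and sieving gives $\ll\alpha(1-\beta)\rho(1/\alpha)x$.
\end{itemize}
Both are pure upper-bound problems. Since $1-\rho(1/\beta)\asymp 1-\beta$, both are small multiples of the main term. This gives the lower bound $\rho(1/\alpha)(1-\rho(1/\beta))(1-C(1-\beta)-C\alpha)x$, positive for suitable $\alpha,\beta$.

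Thus $P^+(n-1)>P^+(n)$ follows from the friability of the valley, with no prime $q\mid n-1$ and no shifted-prime input. You should take the friability exponent $\to 0$ rather than fixing $\gamma=1/4$.

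\textbf{Symmetry.} The reflection $n\mapsto -n$ only exchanges your first two patterns. The two alternating patterns are their own reversals, so relabelling does not give them.
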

	\begin{remark}
		We would prefer to obtain comparisons for four consecutive numbers with the same sign $<$ (or $>$).
	\end{remark}
	Furthermore, in Theorem 2 of [10], Kim showed that
	\begin{align*}
		\#\left\{n\leq x:1\leq \frac{P^+(n+1)}{P^+(n)}<1+\varepsilon\right\}\gg_\varepsilon \frac{x}{(\log x)^{50}}.
	\end{align*}
	In this paper, we establish a better lower bound.
	\begin{theorem}
		For any $\varepsilon>0$, we have
		\begin{align*}
			&\#\left\{n\leq x:1\leq \frac{P^+(n+1)}{P^+(n)}<1+\varepsilon\right\}\gg_\varepsilon \frac{x}{\log x},\\&\#\left\{n\leq x:1-\varepsilon\leq \frac{P^+(n+1)}{P^+(n)}<1\right\}\gg_\varepsilon \frac{x}{\log x}.
		\end{align*}
	\end{theorem}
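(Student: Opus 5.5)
The aim is Theorem 1.5: $\gg_\varepsilon x/\log x$ integers $n\le x$ whose ratio $P^+(n+1)/P^+(n)$ lies in $[1,1+\varepsilon)$, and likewise in $[1-\varepsilon,1)$. The plan is to force both largest prime factors to be primes of a fixed size $\asymp x^{\theta}$ lying in a short multiplicative window, and to count with an upper-bound sieve plus a lower bound coming from primes in arithmetic progressions. Fix a small $\theta\in(0,1/2)$ (say $\theta=1/4$) and set $\delta=\varepsilon/10$. Consider $n=pm$ and $n+1=qm'$ with $p,q$ primes in $(x^{\theta},x^{\theta}(1+\delta)]$; then $q/p\in[(1+\delta)^{-1},1+\delta]$.

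\textbf{Step 1: ordering of the primes.} We need $P^+(m)\le p$ and $P^+(m')\le q$, and we need $q\ge p$ for the first pattern (resp. $q<p$ for the second). The device is to dissect $(x^\theta,(1+\delta)x^\theta]$ into two halves $I_1<I_2$ and take $p\in I_1$, $q\in I_2$. Then $1< q/p<1+\delta<1+\varepsilon$ automatically. For the second pattern swap the roles, using $1-\varepsilon<(1+\delta)^{-1}$.

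\textbf{Step 2: making the cofactors friable.} It suffices to require $P^+(m)<x^\theta$ and $P^+(m')<x^\theta$. Here the loss of $1/\log x$ is tolerated by a cheap trick: take $m'$ to be simply $q$-free, with no friability condition, and instead make $n+1=qm'$ where $m'\le x^{1-\theta}$ has all prime factors $<x^{\theta}$. The cleanest route is to fix $m$ and $m'$ as \emph{bounded-size products of small primes}; this is where the bound becomes $x/\log x$ rather than $x$. Concretely:
\begin{align*}
S=\sum_{p\in I_1}\ \#\{m\le x/p:\ P^+(m)<x^\theta,\ pm+1=q m',\ q\in I_2,\ P^+(m')<x^\theta\}.
\end{align*}
It is simpler still to count pairs $(p,q)$ with $pm+1\equiv0\pmod q$. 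For each pair of primes $p\in I_1$, $q\in I_2$ (so $\gg x^{2\theta}/\log^2x$ pairs, with $p\neq q$), the congruence $pm\equiv-1\pmod q$ has a residue class of solutions $m$. Among the $m\le x^{1-\theta}$ in that class (about $x^{1-2\theta}$ of them) we need $m$ and $m'=(pm+1)/q$ to be $x^\theta$-friable.

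\textbf{Step 3: lower bound via Bombieri–Vinogradov-type input.} Rather than demanding friability in a fixed progression for every pair (too strong unconditionally), sum over $q$ and use that $x^\theta$-friable numbers $m\le x^{1-\theta}$ have positive density $\rho(1/\theta-1)$. They are also equidistributed in progressions to moduli $q\le x^{\theta}$ on average, which is known unconditionally (Bombieri–Vinogradov for friable numbers, level $1/2$), since $\theta<1/2$. This gives $\gg x^{1-\theta}/\log x$ admissible $m$ for each $p$, up to the second friability condition. We enforce that condition by removing, with an upper-bound sieve (Brun–Titchmarsh/Selberg), those $m$ for which $m'$ has a prime factor $r\ge x^\theta$. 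Choosing $\theta$ close to $1/2$ makes $m'\le x^{1-2\theta}$-ish small; in fact if $m\le x^{1-\theta}$ and $q\asymp x^\theta$ then $m'\asymp x^{1-\theta}$. Taking $\theta>1/2$ is what we actually need, because then every cofactor is $<x^{1-\theta}<x^\theta$ and hence automatically friable, and we only require $m\le x^{1-\theta}$. With $\theta$ slightly above $1/2$ the moduli $q\asymp x^\theta$ exceed level $1/2$. We therefore use the level-beyond-$1/2$ results for a \emph{fixed} residue $-1$ (Bombieri–Friedlander–Iwaniec type, well-factorable weights are not available here), or, more simply, count from the other side: $n+1=qm'$ with $q\in I_2$ and $m'$ any integer $\le x^{1-\theta}$, which is free, and require $n=(qm'-1)$ to have a prime factor in $I_1$. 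The count of $n\le x$ with $n\equiv-1\pmod q$, $p\mid n$ is $\approx x/(pq)$ for $pq\le x$, i.e.\ $2\theta<1$. Summing gives $\sum_{p,q} x/(pq)\asymp\delta^2 x/\log^2 x$. This loses one log too many, so the main obstacle is recovering one logarithm: the plan is to let $p$ range over a window of size $\delta p$ while leaving $q$ free in $(p,(1+\delta)p]$, with the cofactors controlled by choosing $\theta\in(1/3,1/2)$ and requiring cofactors to be $x^\theta$-friable via the friable Bombieri–Vinogradov theorem. The density $\rho$ is then a constant, and the sum $\sum_p\sum_q x/(pq)$ becomes $\asymp x\,(\delta/\log x)^2\cdot(\log x)$ once one sums dyadically over all scales $x^\theta$, $\theta\in(1/3,1/2)$, with $p,q$ at the same scale. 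Checking that this dyadic summation genuinely gives $x/\log x$, with disjointness of the events across scales, is the step I expect to be hardest.
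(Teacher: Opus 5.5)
\textbf{There is a genuine gap.} Your final plan has the right skeleton: two primes $p<q$ in a common $(1+\delta)$-adic window (not a dyadic one) at a scale $P<x^{1/2}$, so that $\#\{n\le x:\ p\mid n,\ q\mid n+1\}=x/(pq)+O(1)$ gives $\asymp\delta^2x/\log^2x$ per window, summed over $\asymp\log x$ windows. But you have misplaced the difficulty and left the decisive step unproved.

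Disjointness across windows is trivial: on the good event $p=P^+(n)$, so $n$ determines its window. What is missing is the per-window lower bound once you impose $P^+(n/p)\le p$ and $P^+((n+1)/q)\le q$.
\begin{itemize}
\item At a generic scale $P=x^{\theta}$ with $\theta\in(1/3,1/2)$, each cofactor has size about $x^{1-\theta}$. It has a prime factor exceeding $P$ with relative frequency $\log\frac{1-\theta}{\theta}$, which is a constant and not small.
\item A friable Bombieri--Vinogradov theorem lets you impose friability of $n/p$ while averaging over the modulus $q$. But friability of the quotient $(n+1)/q$ is not a congruence condition, so it gives you nothing for the joint event.
\item Removing the second bad event by Brun--Titchmarsh or Selberg loses a constant factor, and the removed part can then exceed the main term.
\item Even with exact Bombieri--Vinogradov asymptotics for both bad events, the union bound only gives the factor $1-2\log\frac{1-\theta}{\theta}$. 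This is negative for $\theta<1/(1+\sqrt{e})\approx 0.378$.
\end{itemize}
So nothing in your argument guarantees positivity at the scales you chose. The detour through $\theta>1/2$ cannot help either, since then $pq>x$.

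\textbf{The missing idea} is the one you mention and then abandon: take the scale just below $x^{1/2}$. There the cofactors are only slightly larger than $P$, so the bad events become rare. The paper's construction is as follows.
\begin{itemize}
\item Set $\delta=\varepsilon/2$ and $P=x^{1/2-\delta}(1+\delta)^{-i}$ for $1\le i\le\delta\log x/\log(1+\delta)$. This still gives $\asymp\log x$ disjoint windows, all inside $[x^{1/2-2\delta},x^{1/2-\delta}]$.
\item Take $p_1\in(P,(1+\delta)P]$ and $p_2\in((1+\delta)P,(1+2\delta)P)$.
\item The cofactors are then at most $x^{1/2+2\delta}$, so a prime factor exceeding $P$ occurs with relative density $O(\delta)$.
\item Write $a=a_0p_0$ with $p_0>P$ and apply Lemma 2.6 with modulus $p_2<x^{1/2}$. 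This gives $S_2(P)\le C_2\delta^3x/\log^2x+O(x/\log^{10}x)$, against $S_1(P)\ge C_1\delta^2x/\log^2x$.
\item Taking $\delta<C_1/(2C_2)$ and summing over the windows yields $\gg_\varepsilon x/\log x$.
\end{itemize}
No friable equidistribution is needed: the main count $x/(p_1p_2)+O(1)$ is elementary, and Bombieri--Vinogradov enters only in the removal step.
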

	\begin{remark}
		By a heuristic argument, we may conjecture that its order is $\asymp_\varepsilon x(\log x)^{-1}$.
	\end{remark}

	\section{Lemmas}
	
	\begin{lemma}
		Let $g$ be a natural number and let $a_i, b_i $ ($i=1,2,\cdots, g$) be integers satisfying
		\begin{align*}
			E:=\prod_{i=1}^g a_i\prod_{1\leq r<\leq g}(a_r b_s-a_sb_r)\neq 0.
		\end{align*}
		Let $\omega(p)$ denote the number of solutions in n modulo $p$ of 
		\begin{align*}
			\prod_{i=1}^g(a_in+b_i)\equiv 0\mkern-15mu\pmod{p},
		\end{align*}
		and suppose that
		\begin{align*}
			\omega(p)<p, \ \text{for all}\ p.
		\end{align*}
		If the real numbers $y$ and $z$ satisfy $1<y\leq z$, then 
		\begin{align*}
			&\#\{n:z-y<n\leq z, a_in+b_i \ \text{prime for}\ i=1,2,\cdots, g\}\\&\leq 2^gg!\prod_p\left(1-\frac{\omega(p)-1}{p-1}\right)\left(1-\frac{1}{p}\right)^{-g+1}\frac{y}{\log^g y}\Bigg(1+O\left(\frac{\log\log (3y)+\log\log |3E|}{\log y}\right)\Bigg),
		\end{align*}
		where the constant implied by the $O$-symbol depends at most on $g$.
	\end{lemma}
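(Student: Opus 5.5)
This is the standard upper-bound sieve estimate for prime $g$-tuples, Theorem 5.7 of Halberstam–Richert. I would prove it with Selberg's upper bound sieve (the $\Lambda^2$ sieve) applied to the polynomial $F(n)=\prod_{i=1}^g(a_in+b_i)$.

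\emph{Setting up the sieve.} Take $\mathcal{A}=\{F(n): z-y<n\le z\}$ and sieve by the primes $p<\xi$ not dividing $E$, with $\xi$ a small power of $y$. If every $a_in+b_i$ is prime, then either $F(n)$ has no prime factor below $\xi$, or some $|a_in+b_i|<\xi$. The second case contributes at most $O_g(\xi)$ values of $n$. For squarefree $d$ we have $\#\{n\in\mathcal A: d\mid F(n)\}=\frac{\omega(d)}{d}y+R_d$, where $\omega$ is multiplicative and $|R_d|\le\omega(d)\le g^{\nu(d)}$. For $p\nmid E$ the roots $-b_i/a_i$ are distinct modulo $p$, so $\omega(p)=g$. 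For $p\mid E$ we have $\omega(p)<p$ by hypothesis. Selberg's sieve then gives
\begin{align*}
S(\mathcal A,\xi)\le \frac{y}{G(\xi)}+\sum_{d<\xi^2,\ d\mid P(\xi)}3^{\nu(d)}|R_d|,
\qquad G(\xi)=\sum_{d<\xi,\ d\mid P(\xi)}\prod_{p\mid d}\frac{\omega(p)}{p-\omega(p)}.
\end{align*}

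\emph{Bounding the two terms.} The remainder sum is $\ll \xi^2(\log\xi)^{c_g}$. Choosing $\xi^2=y/(\log y)^{C}$ makes it negligible against $y/\log^g y$.

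The main term requires a lower bound for $G(\xi)$. I would write $\omega(p)/(p-\omega(p))$ through the multiplicative function $h$ and compare $G(\xi)$ with $\sum_{d<\xi}\mu^2(d)g^{\nu(d)}/d$ twisted by local factors. A standard Rankin or Mellin (Wirsing-type) argument should give
\begin{align*}
G(\xi)=\frac{(\log\xi)^g}{g!}\prod_p\Big(1-\frac{\omega(p)}{p}\Big)^{-1}\Big(1-\frac1p\Big)^{g}\Big(1+O\Big(\frac{\log\log 3|E|}{\log\xi}\Big)\Big).
\end{align*}
Substituting $\log\xi=\frac12\log y-O(\log\log y)$ produces the factors $2^g g!$ and the error term $\log\log(3y)/\log y$.

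\emph{Matching the product.} The Euler product in the statement should then come out of this expression. Since $1-\frac{\omega(p)}{p}$ and $\big(1-\frac{\omega(p)-1}{p-1}\big)\big(1-\frac1p\big)$ differ, I would first check how the two products are related: either directly, or through the version sifting $n$ with the extra factor accounting for $n$ itself, as Halberstam–Richert do. This check decides whether the exponent in the statement is $-g+1$ or $-g$.

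\emph{Main obstacle.} The real difficulty is uniformity in $E$. The primes dividing $E$ have irregular $\omega(p)$, and there can be about $\log|E|/\log\log|E|$ of them. To control them I would bound $\sum_{p\mid E}\frac{\log p}{p}\ll\log\log 3|E|$. Then, in the asymptotic for $G(\xi)$, I would truncate the contribution of the exceptional primes at $\xi$, which costs a relative error of $O(\log\log 3|E|/\log\xi)$. This is exactly the second error term in the statement.
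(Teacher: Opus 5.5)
Your route is the paper's. The paper simply quotes Halberstam--Richert, Theorem 5.7, which is proved by Selberg's $\Lambda^2$-sieve along your lines. There, uniformity in $E$ comes from a lower-bound condition $\sum_{w\le p<z}\frac{\omega(p)\log p}{p}\ge g\log\frac{z}{w}-L$ with $L\ll_g\sum_{p\mid E}\frac{\log p}{p}\ll\log\log 3|E|$, as you anticipate. Three points need correcting, none of which changes the route.

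\emph{The two products are equal.} For every $p$,
$\bigl(1-\frac{\omega(p)-1}{p-1}\bigr)\bigl(1-\frac{1}{p}\bigr)=\frac{p-\omega(p)}{p}=1-\frac{\omega(p)}{p}$.
So the constant $\prod_p\bigl(1-\frac{\omega(p)}{p}\bigr)\bigl(1-\frac{1}{p}\bigr)^{-g}$ produced by your $y/G(\xi)$ is exactly the stated one. No extra sieving of $n$ is needed, and the exponent question disappears.

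\emph{You must sieve by all primes $p<\xi$, including those dividing $E$.} For those primes $\omega(p)\le g-1$ and $\omega(p)<p$, so the sieve hypotheses still hold. Sieving only by $p\nmid E$, as your set-up paragraph says, loses the factor $\prod_{p\mid E,\,p<\xi}\bigl(1-\frac{\omega(p)}{p}\bigr)^{-1}$. This factor is not absorbed by the error term. For instance, for the pair $n,\,n+b$ with $b$ a primorial (so $E=b$) it equals about $b/\varphi(b)\asymp\log\log b$.

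\emph{Only a lower bound for $G(\xi)$ is needed, and only that holds uniformly in $E$.} Completing $\prod_{p<\xi}$ to $\prod_p$ can fail in one direction when $E$ has many prime factors above $\xi$. However, since $\omega(p)\le g$, the tail factors satisfy $\bigl(1-\frac{\omega(p)}{p}\bigr)^{-1}\bigl(1-\frac{1}{p}\bigr)^{g}\le 1+O_g(p^{-2})$. That is the harmless direction for an upper bound.
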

	\begin{proof} 
		This is [7, Theorem 5.7]. It is derived from Selberg's upper-bound sieve (see Lemma 2.3 below).
	\end{proof}
	
		\begin{lemma}
		Let $D\geq 2$ and $L>1$. Let $\mathscr{P}$ denote a set of primes. Let $z\geq 2$ and write $P(z):=\prod_{p\leq z,p\in\mathscr{P}}p$. There exist two sequences $\{\lambda_d^\pm\}_{d=1}^{\infty}$ of real numbers, vanishing for $d>D$ or $\mu(d)=0$, satisfying $\lambda_1^\pm=1$, $|\lambda_d^{\pm}|\leq 1$, $\lambda^-*1\leq \mu*1\leq \lambda^+*1$, and such that
		\begin{align*}
			&\sum_{d|P(z)}\lambda_d^+\frac{\omega(d)}{d}\leq \prod_{\substack{p\leq z\\p\in \mathscr{P}}}\left(1-\frac{\omega(p)}{p}\right)\left(F(s)+O\left(\frac{e^{L^{1/2}-s}}{(\log D)^{1/3}}\right)\right),\\&\sum_{d|P(z)}\lambda_d^-\frac{\omega(d)}{d}\geq \prod_{\substack{p\leq z\\p\in \mathscr{P}}}\left(1-\frac{\omega(p)}{p}\right)\left(f(s)+O\left(\frac{e^{L^{1/2}-s}}{(\log D)^{1/3}}\right)\right)
		\end{align*}
		uniformly for all multiplicative function $\omega$ satisfying
		\begin{align*}
			&(i)\ 0<\omega(p)<p\ (p\in\mathscr{P}),\\&(ii)\ \prod_{\substack{u<p\leq v\\p\in\mathscr{P}}}\left(1-\frac{\omega(p)}{p}\right)^{-1}\leq \frac{\log v}{\log u}\left(1+\frac{L}{\log u}\right)\ (2\leq u\leq v\leq z),
		\end{align*}
		where $s=\log D/\log z$ and $f(s)$, $F(s)$ are determined by the differential-difference equations
		\begin{equation}
			\begin{aligned}
				&\left\{ \begin{aligned}
					&sF(s)=2e^{\gamma}, \quad&& 1\leq s\leq 2;\\&sf(s)=0,\quad&& 0\leq s\leq 2;\\&(sF(s))'=f(s-1),(sf(s))'=F(s-1),\quad&&s\geq 2.
				\end{aligned}\right.\nonumber
			\end{aligned}
		\end{equation}
	\end{lemma}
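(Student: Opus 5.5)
This is the Rosser--Iwaniec (beta-) sieve, or Jurkat--Richert's fundamental form with explicit error, as presented in the literature (e.g.\ Friedlander--Iwaniec, \emph{Opera de Cribro}, Theorem 11.13, or Iwaniec's 1980 paper). I would quote it from there. Below is the sketch I would follow if a self-contained argument is wanted.

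First, I construct the weights combinatorially. With $\beta=2$, take $\lambda_d^{+}=\mu(d)$ if $d=p_1p_2\cdots p_r$ with $p_1>p_2>\cdots>p_r$, $p\in\mathscr P$, $p\le z$, and
\begin{align*}
p_1\cdots p_{m-1}p_m^{3}<D \quad \text{for all odd } m\le r .
\end{align*}
Set $\lambda_d^+=0$ otherwise. For $\lambda^-$ the same condition is imposed for even $m$. These weights are supported on squarefree $d\le D$, satisfy $|\lambda_d^\pm|\le1$ and $\lambda_1^\pm=1$, and vanish for $\mu(d)=0$.

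Second, I check the sieve inequality $\lambda^-*1\le\mu*1\le\lambda^+*1$ by Buchstab's identity. Write $V^\pm(n)=\sum_{d\mid (n,P(z))}\lambda^\pm_d$. Iterating $\mu*1=1-\sum_{p}(\mu*1)(\cdot/p)$, restricted to the largest prime factors, expresses $V^+(n)-[\,(n,P(z))=1\,]$ as a sum over the "first failure" of the truncation condition. Each such term is a nonnegative count, because it has the form $[\,(m,P(p))=1\,]\ge0$ with the sign $(-1)^{r}$ and $r$ even. The analogous statement holds for $V^-$.

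Third, I evaluate the main term. Applying the same Buchstab decomposition to $\sum_d\lambda_d^\pm\omega(d)/d$ gives
\begin{align*}
V(z)-\sum_{n}\,V(p_n)\,\frac{\omega(p_1\cdots p_n)}{p_1\cdots p_n},
\end{align*}
where $V(w)=\prod_{p<w,\,p\in\mathscr P}(1-\omega(p)/p)$ and the sum runs over the failure chains. Hypothesis (ii), with dimension $\kappa=1$ and constant $L$, lets me replace sums over primes by integrals against $dt/(t\log t)$ and ratios $V(w_1)/V(w_2)$ by $\log w_2/\log w_1$. The error from each replacement is $O(L/\log w)$. Inducting on the length of the chain shows that the resulting iterated integrals equal $F(s)$, respectively $f(s)$, the unique solutions of the stated delay-differential system: the initial condition $sF(s)=2e^\gamma$ on $[1,2]$ comes from Mertens' theorem, and $f=0$ on $[0,2]$. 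For $s<1$ the inequality is trivial.

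The main obstacle is controlling the accumulated error uniformly in $s$, so as to get the factor $e^{L^{1/2}-s}(\log D)^{-1/3}$. I would first split off the chains with many steps. For these, the truncation forces the $p_m$ to decay geometrically, and a Rankin-type bound gives a contribution $\ll e^{-s}$ times a power of $e^{\sqrt L}$. For the remaining short chains, the $O(L/\log p)$ errors sum to $O(L^{1/2}(\log D)^{-1/3})$ once primes below $\exp((\log D)^{1/3})$ are treated separately, using a fundamental-lemma bound at that level. Since both $F(s)-1$ and $1-f(s)$ decay like $e^{-s\log s}$, the error can be absorbed into the stated form.
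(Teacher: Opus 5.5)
Your proposal matches the paper, whose proof consists only of citing Iwaniec's Rosser sieve [9]; the $\beta=2$ weights, Buchstab iteration and error bookkeeping you sketch are the standard argument behind that citation. One correction to your sketch: with no condition at $m=1$ you get $\lambda^-_p=-1$ for every $p\le z$, so support on $d\le D$ requires $z\le D$, and if $\mathscr P$ has a prime $q\in(D,z]$ then no admissible $\lambda^-$ exists at all, since $(\lambda^-*1)(q)=\lambda^-_1=1>0=(\mu*1)(q)$; the lemma (like the theorems you cite) should therefore be read with $s\ge1$, rather than the case $s<1$ being dismissed as trivial.
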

	\begin{proof}
		This lemma is the Rosser-Iwaniec sieve [9].
	\end{proof}

	\begin{lemma}[Selberg’s $\Lambda^2$-sieve] Let $\xi\geq 2$ and $L>1$. Let $\mathscr{P}$ denote a set of primes. Let $\nu(n)$ define the number of distinct prime factors of $n$. Let $z\geq 2$ and write $P(z):=\prod_{p\leq z,p\in\mathscr{P}}p$. There exists a sequence $\{\lambda_d\}_{d=1}^{\infty}$ of real numbers, vanishing for $d>\xi$ or $\mu(d)=0$, satisfying $\lambda_1=1$, $|\lambda_d|\leq 3^{\nu(d)}$, $\mu*1\leq \lambda*1$, and such that
		\begin{align*}
			\sum_{d|P(z)}\lambda_d\frac{\omega(d)}{d}\leq\frac{1}{G(\xi,z)}= \prod_{\substack{p\leq z\\p\in \mathscr{P}}}\left(1-\frac{\omega(p)}{p}\right)\left(\frac{1}{\sigma(s)}+O\left(\frac{Ls^5}{\log z}\right)\right),
		\end{align*}
		uniformly for all multiplicative function $\omega$ satisfying
		\begin{align*}
			&(i)\ 0<\frac{\omega(p)}{p}\leq 1-\frac{1}{A_1}\ (p\in\mathscr{P}),\\&(ii)-L\leq\sum_{w\leq p<z}\frac{\omega(p)}{p}\log p-2\log \frac{z}{w}\leq A_2\  (2\leq w\leq z),
		\end{align*}
		where $A_1\geq 1$, $A_2\geq 1$ are some suitable constants, $s=\log \xi/\log z$ and $\sigma(s)$ is determined by the differential-difference equations
		\begin{equation}
			\begin{aligned}
				&\left\{ \begin{aligned}
					&s^{-2}\sigma(s)=8^{-1}e^{-2\gamma},\quad&& 0\leq s\leq 2,\\&s(\sigma(s))'=2\sigma(s)-2\sigma(s-2),\quad&&s> 2.
				\end{aligned}\right.\nonumber
			\end{aligned}
		\end{equation}
	\end{lemma}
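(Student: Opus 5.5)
This is a standard result, Selberg's $\Lambda^2$ upper-bound sieve in the dimension-two normalization of condition (ii); I would derive it from the general theory, essentially following Halberstam--Richert (Chapters 3--7) or the Diamond--Halberstam--Richert treatment.

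\emph{Construction of the weights.} I would first define $\lambda_d$ by Selberg's quadratic optimization. Write
\begin{align*}
G(\xi,z)=\sum_{\substack{d<\xi\\ d\mid P(z)}}\mu^2(d)\,g(d),\qquad g(p)=\frac{\omega(p)}{p-\omega(p)},
\end{align*}
with $g$ multiplicative. Set
\begin{align*}
\rho_d=\mu(d)\frac{d}{\omega(d)}\frac{G_d(\sqrt{\xi}/d,z)}{G(\sqrt{\xi},z)},
\end{align*}
where $G_d$ denotes the same sum restricted to $(m,d)=1$, and $\rho_d=0$ for $d\ge\sqrt\xi$. Then take $\lambda_d=\sum_{[d_1,d_2]=d}\rho_{d_1}\rho_{d_2}$. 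Since $(\sum_{d\mid n}\rho_d)^2\ge 0$ and the square equals $1$ when $n$ has no prime factor in $P(z)$, we get $\mu*1\le\lambda*1$. The weights are supported on squarefree $d<\xi$, and $\lambda_1=1$.

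\emph{Size of the weights and the main term.} The standard identity $G(\sqrt\xi)\ge \sum_{e\mid d}g(e)\,G_d(\sqrt\xi/d)$ gives $|\rho_d|\le 1$. The number of pairs $(d_1,d_2)$ with $[d_1,d_2]=d$ is $3^{\nu(d)}$, so $|\lambda_d|\le 3^{\nu(d)}$. Diagonalizing the quadratic form $\sum\lambda_d\omega(d)/d$ gives exactly $1/G$ at the optimum. (I would follow the paper's normalization $\xi$ versus $\sqrt\xi$ so that the stated $s=\log\xi/\log z$ matches the function $\sigma$.)

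\emph{Asymptotics of $G$ (the main obstacle).} The remaining step is to show
\begin{align*}
G(\xi,z)^{-1}=\prod_{p\le z}\Bigl(1-\frac{\omega(p)}{p}\Bigr)\Bigl(\frac1{\sigma(s)}+O\Bigl(\frac{Ls^5}{\log z}\Bigr)\Bigr).
\end{align*}
Here I would use condition (ii), which is a dimension $\kappa=2$ hypothesis, to obtain an asymptotic for $\sum_{m<x,\,m\mid P(z)}\mu^2(m)g(m)$. The approach is to show that $T(x,z)=G(x,z)\prod_{p<z}(1-\omega(p)/p)$ satisfies, up to error terms, the same differential-delay relation as $\sigma$. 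This comes from Buchstab-type iteration, taking logarithmic weights via $\log m=\sum_{p\mid m}\log p$ and using (ii) to control the sums over primes.

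Concretely, the identity
\begin{align*}
\sum_{m<x}\mu^2(m)g(m)\log m=\sum_{p}g(p)\log p\sum_{\substack{m<x/p\\ p\nmid m}}\mu^2(m)g(m)
\end{align*}
gives an integral equation. Its leading behaviour in the range $s\le 2$ is $c(\log x)^2$, using the Mertens-type estimate from (ii). This matches $\sigma(s)=s^2/(8e^{2\gamma})$ there, and the relation $s\sigma'(s)=2\sigma(s)-2\sigma(s-2)$ then propagates it to $s>2$. The errors accumulate polynomially in $s$, giving the factor $s^5$. Condition (i) controls $g(p)$ and the terms with $p\mid m$. Bounding these errors uniformly in $\omega$ is the technical heart of the proof; I would cite Halberstam--Richert, Theorem 7.1 and Lemma 4.1, rather than redo that analysis.
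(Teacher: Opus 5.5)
Your route is the same as the paper's. The paper gives no argument beyond citing Halberstam--Richert [7, Lemma 6.1], and you likewise build the standard Selberg weights and defer the asymptotic evaluation of $G$, the only substantial part, to the same book. Your normalization remark is also correct: $\rho_d$ is supported on $d<\sqrt{\xi}$, so the optimum is really $1/G(\sqrt{\xi},z)$, which the paper writes as $1/G(\xi,z)$.

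There is, however, a concrete error in your explicit formula for the weights. With $G_d$ defined as you define it (no factor $g(d)$), the Selberg minimizer is
\[
\rho_d=\mu(d)\prod_{p\mid d}\Bigl(1-\frac{\omega(p)}{p}\Bigr)^{-1}\frac{G_d(\sqrt{\xi}/d,z)}{G(\sqrt{\xi},z)}=\mu(d)\,\frac{d}{\omega(d)}\,g(d)\,\frac{G_d(\sqrt{\xi}/d,z)}{G(\sqrt{\xi},z)}.
\]
That is, you dropped the factor $g(d)$. This breaks two of your claims.

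\begin{itemize}
\item Your $\rho_d$ is not the minimizer. The quadratic form is positive definite in the variables $y_e$, so any other admissible choice gives a value strictly larger than $1/G$, and the inequality $\sum_d\lambda_d\omega(d)/d\le 1/G$ fails.
\item The bound $|\rho_d|\le 1$ is false for your formula. For instance, if $\sqrt{\xi}>P(z)$ then $G_p(\sqrt{\xi}/p,z)/G(\sqrt{\xi},z)=(1+g(p))^{-1}$ exactly. Your formula then gives $\rho_p=-1/g(p)=-(p-\omega(p))/\omega(p)$, which is large.
\end{itemize}

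The inequality you quote gives $|\rho_d|\le1$ only for the corrected weights. Indeed,
\[
G(\sqrt{\xi},z)=\sum_{e\mid d}g(e)\,G_d(\sqrt{\xi}/e,z)\ge G_d(\sqrt{\xi}/d,z)\prod_{p\mid d}\bigl(1+g(p)\bigr),
\]
and $1+g(p)=(1-\omega(p)/p)^{-1}$, so $|\rho_d|\le1$. From there, $|\lambda_d|\le 3^{\nu(d)}$, $\lambda_1=1$, $\mu*1\le\lambda*1$ and $\sum_d\lambda_d\omega(d)/d=1/G(\sqrt{\xi},z)$ follow exactly as you say. With this correction, and the asymptotic for $G$ taken from [7] as the paper does, your proposal is fine.
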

	\begin{proof}
		This is Selberg’s $\Lambda^2$-sieve. We can see it in [7, Lemma 6.1].
	\end{proof}

	\begin{lemma}
		For $\varepsilon>0$ and $a\in\{1,-1\}$, we have
		\begin{align*}
			\frac{1}{\log x}\sum_{p'>x^{1/2}}\log p'\sum_{p>L}\sum_{\substack{l\leq x/p\\lp\equiv a\mkern-15mu\pmod{p'}}}1=\frac{x}{2}\log\frac{\log x}{\log L}+o(x),
		\end{align*}
		where $x^\varepsilon<L<x^{1-\varepsilon}$.
	\end{lemma}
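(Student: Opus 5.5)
We swap the order of summation so that $p'$ is innermost. Writing $m=lp$ with $p>L$, $l\le x/p$, the expression equals
\begin{align*}
\frac{1}{\log x}\sum_{L<p\leq x}\ \sum_{l\leq x/p}\ \sum_{\substack{p'>x^{1/2}\\ p'\mid lp-a}}\log p'.
\end{align*}
For each fixed $m=lp\le x$, the inner sum is $\sum_{p'\mid m-a,\,p'>x^{1/2}}\log p'$. Since $|m-a|\le x+1$, at most one prime $p'>x^{1/2}$ divides $m-a$. So the inner sum is $\log P^+(m-a)$ when $P^+(m-a)>x^{1/2}$, and $0$ otherwise.

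The main step is to treat this as a Bombieri--Vinogradov problem in the original order. For fixed $p'$, the number of $(p,l)$ with $p>L$, $lp\le x$ and $lp\equiv a \pmod{p'}$ is
\begin{align*}
\frac{1}{p'}\#\{(p,l):p>L,\ lp\le x\}
\end{align*}
plus an error. Here $p'>x^{1/2}$ is too large for Bombieri--Vinogradov directly. So I would first use the divisor-switching trick: write $lp-a=p'k$ with $k<x^{1/2}+1$, and count instead primes $p$ in arithmetic progressions $lp\equiv a \pmod k$, weighted by $\log p'$, i.e. by $\log((lp-a)/k)$. This is the step I expect to be the main obstacle.

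Concretely, I would set
\begin{align*}
S=\sum_{L<p\le x}\ \sum_{l\le x/p}\ \sum_{\substack{p'\mid lp-a\\ p'>x^{1/2}}}\log p'.
\end{align*}
I would then replace the indicator ``$p'$ prime'' by means of the identity $\log(lp-a)=\sum_{d\mid lp-a}\Lambda(d)$. The contribution of prime factors $\le x^{1/2}$ is $\sum_{q\le x^{1/2}}\Lambda(q)\#\{lp\equiv a \pmod q\}$. Bombieri--Vinogradov, applied to the convolution $1*\mathbf 1_{p>L}$ (a bilinear form with one factor prime and smooth $l$, so Barban--Davenport--Halberstam type level $x^{1/2-\epsilon}$ holds), evaluates this as
\begin{align*}
\sum_{q\le x^{1/2}}\frac{\Lambda(q)}{\varphi(q)}\,\#\{(p,l)\}\approx \tfrac12\log x\cdot N,\qquad N:=\#\{(p,l):p>L,\ lp\le x\}.
\end{align*}
The small window $q\in(x^{1/2-\epsilon},x^{1/2}]$ is handled by Brun--Titchmarsh (Lemma 2.1) at cost $O(\epsilon)$, and prime powers are negligible.

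For the total, I would compute
\begin{align*}
\sum_{p,l}\log(lp-a)=N\log x-\sum_{p,l}\log\frac{x}{lp}=N\log x+O(x),
\end{align*}
and subtract the small-prime contribution to get $S=\tfrac12 N\log x+O(x)+o(N\log x)$. Finally, by Mertens,
\begin{align*}
N=\sum_{L<p\le x}\frac{x}{p}+O\!\left(\frac{x}{\log x}\right)=x\log\frac{\log x}{\log L}+O\!\left(\frac{x}{\log x}\right).
\end{align*}
Dividing by $\log x$ gives $\frac{x}{2}\log\frac{\log x}{\log L}+o(x)$, since $\log\frac{\log x}{\log L}\asymp_\varepsilon 1$. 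The $O(x)$ term must be handled with care: it comes from $\sum\log(x/(lp))\ll N$. Dividing by $\log x$ turns it into $O(x/\log x)$, which is fine.
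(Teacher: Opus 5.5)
Your proposal is correct and follows the route the paper indicates ("easy from [6] and Lemma 2.6"), namely Goldfeld's method. You write $\log(lp-a)=\sum_{d\mid lp-a}\Lambda(d)$, evaluate the part with $d\le x^{1/2}$ by a Bombieri--Vinogradov theorem for the convolution $1*\mathbf{1}_{p>L}$ (the paper's Lemma 2.6), and subtract it from $N\log x+O(x)$. The remaining issues are cosmetic: the expected main term for modulus $q$ should use the count of pairs with $(lp,q)=1$, which still sums to $\tfrac12 N\log x+O(N)$; the constraint $lp\le x$ and the range $l\le x^{\epsilon}$ need the usual short-interval splitting or classical BV; and the divisor-switching detour in your second paragraph is not needed.
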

	\begin{proof}
		This is easy from [6] and Lemma 2.6 below.
	\end{proof}
	
	\begin{lemma}
		For $\varepsilon>0$, we have
		\begin{align*}
			\sum_{n\leq x,P^+(n)\leq y}1=x\rho(u)\left(1+O_\varepsilon\left(\frac{\log(u+1)}{\log y}\right)\right)
		\end{align*}
		uniformly for
		\begin{align*}
			x\geq x_0(\varepsilon),\qquad \exp{(\log_2x)^{5/3+\varepsilon}}\leq y\leq x,
		\end{align*}
		where $u=\log x/\log y$ and $\rho(u)$ is the Dickman-de Bruijn function which is defined by differential equation
		\begin{equation}
			\begin{aligned}
				&\left\{ \begin{aligned}
					&\rho(u)=1, \quad&& 0\leq u\leq 1,\\&u\rho'(u)=-\rho(u-1)\quad&& u>1.
				\end{aligned}\right.\nonumber
			\end{aligned}
		\end{equation}
	\end{lemma}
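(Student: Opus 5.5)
This is Hildebrand's theorem on friable integers, and in the paper I would simply cite it (Hildebrand, \emph{On the number of positive integers $\leq x$ and free of prime factors $>y$}). If a self-contained argument were wanted, I would follow Hildebrand's method, outlined below. Throughout, write $\Psi(x,y)=\sum_{n\leq x,P^+(n)\leq y}1$.

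The starting point is the identity obtained by summing $\log n=\sum_{d\mid n}\Lambda(d)$ over $y$-friable $n\leq x$:
\begin{align*}
\Psi(x,y)\log x=\int_1^x\frac{\Psi(t,y)}{t}\,dt+\sum_{p^k\leq y}\log p\;\Psi\!\left(\frac{x}{p^k},y\right)+O(x).
\end{align*}
I would then replace the sum over prime powers by an integral, using the prime number theorem with the Vinogradov--Korobov error term $\pi(t)=\mathrm{li}(t)+O\big(t\exp(-c(\log t)^{3/5}(\log\log t)^{-1/5})\big)$. Partial summation turns the identity into
\begin{align*}
u\,\frac{\Psi(x,y)}{x}=\int_0^1\frac{\Psi(x y^{-v},y)}{x y^{-v}}\,dv+\text{(error)}.
\end{align*}
This is exactly the integral equation $u\rho(u)=\int_{u-1}^u\rho(v)\,dv$ satisfied by the Dickman--de Bruijn function.

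The main step is a comparison argument. Put $\Delta(u)=\Psi(x,y)/(x\rho(u))-1$, regarded as a function of $u=\log x/\log y$ with $y$ fixed. I would show by induction on $\lfloor u\rfloor$ that $|\Delta(u)|\ll\log(u+1)/\log y$. The base case $1\leq u\leq 2$ is Mertens' theorem, since there $\Psi(x,y)=x(1-\log u)+O(x/\log y)$. For the inductive step I would divide the integral equation for $\Psi$ by the one for $\rho$. Because $\rho$ is decreasing and log-concave, the averaging $\frac{1}{u\rho(u)}\int_{u-1}^u\rho(v)(\cdot)\,dv$ is a weighted average with total weight $1$. Hence the error propagates additively: each unit step in $u$ adds only about $1/(u\log y)$ to $|\Delta|$, and summing these increments gives the factor $\log(u+1)$.

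I expect the main obstacle to be the error coming from the prime number theorem once $u$ is large. There $\rho(u)=\exp(-(1+o(1))u\log u)$ is extremely small, so the PNT remainder must be smaller than $x\rho(u)/\log y$. This requires $\exp(-c(\log y)^{3/5-o(1)})\ll\rho(u)$, that is, $u\log u\ll(\log y)^{3/5-\varepsilon'}$. Since $u=\log x/\log y$, this is exactly the condition $y\geq\exp((\log\log x)^{5/3+\varepsilon})$. To carry the induction out rigorously in this range, I would follow Hildebrand and bound $\Psi$ from above and below separately, using the monotonicity of $\Psi(t,y)$ in $t$ to control the discretization. Since the result is standard, the paper's proof would consist of the citation.
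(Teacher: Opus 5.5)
Your citation is exactly what the paper does: its proof of this lemma is simply ``This is [8, Theorem 1]'', i.e.\ Hildebrand's theorem, so as a proof for this paper the proposal is fine. Your self-contained outline, however, has a genuine gap at the step you flag as the main obstacle, and the equivalence you claim there is an arithmetic slip. Requiring the prime number theorem remainder $x\exp(-c(\log y)^{3/5-o(1)})$ to be $\ll x\rho(u)/\log y$ means $u\log u\ll(\log y)^{3/5-\varepsilon}$. With $u=\log x/\log y$ this reads $\log x\,\log u\ll(\log y)^{8/5-\varepsilon}$, i.e.\ $y\geq\exp((\log x)^{5/8+\varepsilon})$. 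That is a de Bruijn-type range, not $y\geq\exp((\log\log x)^{5/3+\varepsilon})$. For example, take $\log y=(\log\log x)^2$, which lies inside the lemma's range: then $u\log u\asymp\log x/\log\log x$, vastly larger than $(\log y)^{3/5}$. The same objection applies to the $+O(x)$ in your starting identity. The target accuracy on the left-hand side is $x\rho(u)\,u\log(u+1)$, so no error that is absolute in $x$ is affordable, not even the $O(x/\sqrt{y})$ a trivial bound gives for the terms with $y<p^k\leq x$. You should use the exact identity, summing over all $p^k\leq x$ with $p\leq y$.

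The missing idea is to make every error \emph{relative to $\Psi$ itself}; this is what lets Hildebrand go beyond the de Bruijn range. Write $\theta(t)=t+R(t)$ and integrate by parts. The prime number theorem error then becomes essentially $\int_1^y R(t)\,d_t\Psi(x/t,y)$. Since $\Psi(x/t,y)$ is monotone in $t$, this is bounded by an average of $|R(t)|/t$ against essentially the same weights $\Psi(x/t,y)\,dt$ that form the main term. So the functional equation for $\Psi$ holds up to a factor roughly $1+O(\exp(-(\log y)^{3/5-\varepsilon}))$, besides the $O(1/(u\log y))$ discretisation terms you already account for. This relative error accumulates over $\asymp u$ unit steps in $u$. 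The requirement is therefore $u\exp(-(\log y)^{3/5-\varepsilon})\ll 1/\log y$, i.e.\ $\log u\ll(\log y)^{3/5-\varepsilon}$, which is precisely $y\geq\exp((\log\log x)^{5/3+\varepsilon})$. As written, with the PNT error compared absolutely to $x\rho(u)$, your outline would only prove the weaker de Bruijn-type statement.
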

	\begin{proof}
		This is [8, Theorem 1].
	\end{proof}
	
	\begin{lemma}[Generalized Bombieri–Vinogradov theorem]
		Let $MN = x$, $x^\varepsilon< M < x^{1-\varepsilon}$ and fix
		$A > 0$. Let $(\alpha_m)$, $(\beta_n)$ be divisor-bounded sequences supported on $[M, 2M]$ and $[N, 2N]$, respectively,
		with $(\beta_n)$ satisfying the Siegel–Walfisz condition, which means
		\begin{align*}
			\Bigg|\sum_{\substack{n\sim N\\n\equiv b\mkern-15mu\pmod{q}\\(n,d)=1}}\beta_n-\frac{1}{\varphi(q)}\sum_{\substack{n\sim N\\(n,dq)=1}}\beta_n\Bigg|\ll_A\frac{N\tau(d)^{O(1)}}{(\log N)^A},
		\end{align*}
		for any $d\geq 1, q\geq 1$ and $(b,q)=1$, $A>1$.
		Then for some sufficiently large $B$ and $Q\leq x^{1/2}(\log x)^{-B}$, we have
		\begin{align*}
			\sum_{q\leq Q}\max_{(a,q)=1}\Bigg|\sum_{\substack{m,n\\mn\equiv a\mkern-15mu\pmod{q}}}\alpha_m\beta_n-\frac{1}{\varphi(q)}\sum_{\substack{m,n\\(mn,q)=1}}\alpha_m\beta_n\Bigg|\ll\frac{x}{(\log x)^A}.
		\end{align*}
	\end{lemma}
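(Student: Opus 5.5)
This is a known result from the literature, and I would cite it rather than prove it: Lemma 2.6 is the Bombieri--Vinogradov theorem for bilinear (Type II) convolutions. It is due in essence to Bombieri--Friedlander--Iwaniec and Motohashi, and appears in this form in the work of Tao and Teräväinen [15] and in Friedlander--Iwaniec's \emph{Opera de Cribro}. If one wanted to reprove it, the plan would run as follows.

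The first step passes from residue classes to Dirichlet characters. For each $q\leq Q$, orthogonality gives
\begin{align*}
\sum_{\substack{m,n\\mn\equiv a\mkern-15mu\pmod{q}}}\alpha_m\beta_n-\frac{1}{\varphi(q)}\sum_{\substack{m,n\\(mn,q)=1}}\alpha_m\beta_n=\frac{1}{\varphi(q)}\sum_{\chi\neq\chi_0}\bar\chi(a)\Big(\sum_m\alpha_m\chi(m)\Big)\Big(\sum_n\beta_n\chi(n)\Big).
\end{align*}
Each character $\chi$ modulo $q$ is then reduced to the primitive character $\chi^*$ modulo $q_1\mid q$ that induces it. The coprimality with $q/q_1$ is absorbed by restricting to $(mn,d)=1$ for a suitable $d$; this is exactly why the Siegel--Walfisz hypothesis is stated with the extra condition $(n,d)=1$. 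The divisor sums that arise from this reduction cost only a factor of $(\log x)^{O(1)}$.

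The second step splits the primitive moduli by size at a threshold $q_1\leq (\log x)^{C}$.

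\begin{itemize}
\item \textbf{Small moduli.} Here one uses the Siegel--Walfisz condition on $\beta$. It gives $\sum_n\beta_n\chi(n)\ll N(\log x)^{-A'}$ for every nonprincipal primitive $\chi$ modulo $q_1$. Combined with the trivial bound $\sum_m|\alpha_m|\ll M(\log x)^{O(1)}$ (from divisor-boundedness), the contribution is $x(\log x)^{-A}$ once $A'$ is taken large.
\item \textbf{Large moduli.} Split $q_1$ into dyadic ranges $q_1\sim R$ with $(\log x)^{C}<R\leq Q$. Apply Cauchy--Schwarz and the multiplicative large sieve
\begin{align*}
\sum_{q_1\sim R}\frac{q_1}{\varphi(q_1)}\sideset{}{^*}\sum_{\chi\bmod q_1}\Big|\sum_{n}c_n\chi(n)\Big|^2\ll (R^2+N)\sum|c_n|^2
\end{align*}
to each factor. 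Since $\sum|\alpha_m|^2\ll M(\log x)^{O(1)}$ and $\sum|\beta_n|^2\ll N(\log x)^{O(1)}$, the range $q_1\sim R$ contributes
\begin{align*}
\ll \frac{1}{R}(R^2+M)^{1/2}(R^2+N)^{1/2}(MN)^{1/2}(\log x)^{O(1)}.
\end{align*}
This is at most $\big(x/R+x^{1/2}M^{1/2}+x^{1/2}N^{1/2}+R\,x^{1/2}\big)(\log x)^{O(1)}$.
\end{itemize}

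The main obstacle is making this last bound beat $x(\log x)^{-A}$ in every range. The terms are handled as follows.

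\begin{itemize}
\item The $x/R$ term is saved by $R>(\log x)^{C}$ with $C$ large.
\item The $Rx^{1/2}$ term is saved by $R\leq Q\leq x^{1/2}(\log x)^{-B}$.
\item The middle terms $x^{1/2}M^{1/2}$ and $x^{1/2}N^{1/2}$ are genuinely problematic when $M$ or $N$ is near $x$. The standard fix is to apply the large sieve over the dyadic range $q_1\sim R$ with the factors split more carefully, so that the bound becomes $(R^2+M)^{1/2}(R^2+N)^{1/2}$ divided by $R$. This gives $\ll (R+M^{1/2})(R+N^{1/2})\cdot R^{-1}(MN)^{1/2}/ (MN)^{1/2}\cdot x^{1/2}$, which is acceptable because $M,N\geq x^{\varepsilon}$ ensures both $M^{1/2}N^{1/2}/R$ and $R$ are fine.
\end{itemize}

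Summing over the $O(\log x)$ dyadic ranges then gives the stated bound. In this paper I would simply invoke the lemma as stated in [15] or Friedlander--Iwaniec rather than reproduce the details.
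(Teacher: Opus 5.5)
Your proposal agrees with the paper, which gives no argument for this lemma beyond the citation [5, Theorem 9.16], and your optional sketch is the standard large sieve plus Siegel--Walfisz proof behind such citations. One correction to that sketch: the terms $x^{1/2}M^{1/2}$ and $x^{1/2}N^{1/2}$ are not an obstacle and no ``more careful splitting'' is needed. Your displayed ``fix'' is just the same bound rewritten, and its justification addresses the other two terms. The point is that $MN=x$ with $x^{\varepsilon}<M<x^{1-\varepsilon}$ forces $\max(M,N)<x^{1-\varepsilon}$, so these terms are $\ll x^{1-\varepsilon/2}(\log x)^{O(1)}$, a power saving; this is exactly the role of the range hypothesis on $M$.
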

	\begin{proof}
		We can see it in [5, Theorem 9.16].
	\end{proof}

	Define
	\begin{align*}
		\Psi(x,y):=\sum_{n\leq x,P^+(n)\leq y}1,\ \Psi_q(x,y):=\sum_{\substack{n\leq x,P^+(n)\leq y\\(n,q)=1}}1,\ \Psi(x,y;b,q):=\sum_{\substack{n\leq x,P^+(n)\leq y\\n\equiv b\mkern-15mu\pmod{q}}}1.
	\end{align*}
	\begin{lemma}
		Let $b\in\mathbb{Z}^*$ and $A$, $\varepsilon>0$. Then there exists $C=C(A,\varepsilon)>0$ such that, in the range $x>2$, $(\log x)^C\leq y\leq x^{1/C}$, one has
		\begin{align*}
			\sum_{\substack{q\leq x^{66/107-\varepsilon}\\(q,b)=1}}\Big|\Psi(x,y;b,q)-\frac{\Psi_q(x,y)}{\varphi(q)}\Big|\ll_{b,A,\varepsilon}\frac{\Psi(x,y)}{(\log x)^A}.
		\end{align*}
	\end{lemma}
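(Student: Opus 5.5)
This is a Fouvry--Iwaniec type equidistribution theorem for friable integers to a \emph{fixed} residue $b$, beyond the Bombieri--Vinogradov range $x^{1/2}$. In the paper it would be quoted from Drappeau's work. The exponent $66/107$ is exactly the level of distribution Drappeau obtains, and redoing his optimization from scratch is out of scope here. What follows is therefore a sketch of how the proof is built, not a full reconstruction.

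\textbf{Step 1: factor the friable integers.} Let $n\le x$ with $P^+(n)\le y$ and $y\le x^{1/C}$. Since every prime factor of $n$ is at most $y$, we can split $n=ml$ with $m$ in any prescribed dyadic range $[M,My]$. To make the factorization unique, we choose the splitting canonically, for example by letting $m$ be the smallest initial product of the ascending prime factors that exceeds $M$.

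This writes $\Psi(x,y;b,q)$ as a bilinear form $\sum_{ml\equiv b \pmod q}\alpha_m\beta_l$. The coefficients are divisor-bounded, and the range of $m$ can be put wherever we want. The only coupling between $m$ and $l$ is a condition of the form $P^+(m)\le P^-(l)$ together with $ml\le x$. We remove it by Perron's formula or by a fine subdivision into short intervals, at the cost of a factor $(\log x)^{O(1)}$.

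\textbf{Step 2: small moduli and the Siegel--Walfisz input.} For the range $q\le x^{1/2}(\log x)^{-B}$, Lemma 2.6 applies once we know that $\beta_l$ (the friable part) satisfies the Siegel--Walfisz condition. That condition reduces to bounding character sums $\sum_{l\le X,\,P^+(l)\le y}\chi(l)$ for non-principal $\chi$ of small conductor. The required saving holds (Harper's bounds) precisely when $y\ge(\log x)^C$, which explains the lower limit on $y$ in the statement. A possible exceptional character has to be handled separately. This step is also where we need $\Psi_q(x,y)/\varphi(q)$, rather than $\Psi(x,y)/q$, as the main term, and where the final saving is measured relative to $\Psi(x,y)$.

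\textbf{Step 3: large moduli (the main obstacle).} For $x^{1/2}<q\le x^{66/107-\varepsilon}$ we use the fact that $b$ is fixed. Following Fouvry--Iwaniec and Bombieri--Friedlander--Iwaniec, we factor $q=rs$ with $r$ and $s$ in convenient ranges; this is possible after first splitting the modulus or dropping the absolute values with well-factorable weights. We then apply Linnik's dispersion method to $\sum_q\bigl|\sum_{ml\equiv b}\alpha_m\beta_l-\text{main}\bigr|$. Expanding the square, the diagonal terms and the main terms cancel by Step 2 applied to the smaller moduli. The off-diagonal terms become incomplete Kloosterman sums. We bound these using the Deshouillers--Iwaniec estimates for sums of Kloosterman sums, together with Weil's bound where that is more efficient.

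The genuinely hard part is balancing these bounds against the freedom in choosing $M$ from Step 1. The factorization gives a triple convolution $n=m_1m_2m_3$ with each factor placed at a length of our choosing. Optimizing the admissible configurations over the available Kloosterman estimates yields the exponent $66/107$. I would first try the simplest dispersion with one free factor, which should give some level above $1/2$. I would then reach $66/107$ by adding a second free factor and performing the Deshouillers--Iwaniec optimization as in Drappeau's paper.
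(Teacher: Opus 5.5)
The paper gives no argument here. The lemma is simply quoted as [13, Theorem 1.1], which is Pascadi's theorem on smooth numbers in arithmetic progressions to large moduli; the range of $y$, the main term $\Psi_q(x,y)/\varphi(q)$ and the saving relative to $\Psi(x,y)$ are taken from there. Quoting is therefore the right plan. However, you attribute the result to the wrong source, and your roadmap breaks exactly where the exponent is decided.

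Drappeau's Fouvry--Iwaniec-type theorem for friable integers reaches only $q\le x^{3/5-\varepsilon}$. The final sentence of your Step 3 --- dispersion plus the Deshouillers--Iwaniec bounds, optimized ``as in Drappeau's paper'' --- gives back $3/5$, not $66/107$. What is missing is the spectral input. The level this method produces depends on the bound $\theta$ available for exceptional eigenvalues in the Kuznetsov formula behind the Kloosterman-sum estimates. Note that $(5-8\theta)/(8-12\theta)$ equals $3/5$ at Selberg's $\theta=1/4$, equals $66/107$ at the Kim--Sarnak value $\theta=7/64$, and equals $5/8$ at $\theta=0$. Obtaining Kloosterman-sum bounds in which the exceptional spectrum enters through $\theta$, rather than through the worst case $\theta=1/4$, is what [13] adds, and your sketch says nothing about it.

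Two intermediate steps would also fail as written.

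\textbf{Step 3: factoring the modulus.} You cannot factor $q=rs$, nor trade the absolute values for well-factorable weights. The sum runs over all $q\le x^{66/107-\varepsilon}$ with absolute values, so once they are removed you face arbitrary bounded coefficients $c_q$, and $q$ may be prime. In fixed-residue results of this kind, the fixed $b$ is used instead to move the modulus onto the variables of the sequence. One passes to the complementary divisor $r=(n-b)/q$, or uses reciprocity on the resulting Kloosterman fractions. It is the friable integer, not $q$, that gets factored into pieces of prescribed lengths.

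\textbf{Step 2: the error normalization.} Lemma 2.6 only gives an error $x(\log x)^{-A}$, which is worthless across most of the range. For $y=(\log x)^C$ one has $\Psi(x,y)=x^{1-1/C+o(1)}$. As soon as $u=\log x/\log y$ is of size $\log\log x$, the ratio $\Psi(x,y)/x$ is smaller than any fixed power of $\log x$. The small-moduli input should be Harper's Bombieri--Vinogradov theorem for smooth numbers, whose error is measured against $\Psi(x,y)$. Likewise, the power savings from Step 3 must be converted into savings relative to $\Psi(x,y)$ rather than $x$.
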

	\begin{proof}
		This lemma is [13, Theorem 1.1].
	\end{proof}
	
	\subsection*{Convention.} We use $\varepsilon$ to denote a sufficiently small positive number, and the value of $\varepsilon$ may change from statement to statement. We use the standard asymptotic notation $f\ll g,f\gg g,f=O(g),f=o_{x\rightarrow\infty}(g)$ from analytic number theory. Unless otherwise specified, the constants implicit in $\ll$ and $\gg$ are absolute (independent of all parameters like $\varepsilon, c, u, \delta, \eta$).
	The constants implicit in the notation $\ll_\varepsilon,\gg_\varepsilon$ depend at most on $\varepsilon$.
	In our proof, unless otherwise specified, $C$, $C_1$, and $C_2$ denote suitable absolute constants (independent of all parameters).

	\section{Proof of Theorem 1.3}
	Before starting our proof, we need the following result.
	\begin{proposition}
		For every $\varepsilon>0$, there exists $\eta=\eta(\varepsilon)>0$ and $\delta=\delta(\eta,\varepsilon)>0$ such that for any $0<c<\eta$, the set $\mathcal{C}_{c,\delta}$ defined by
		\begin{align*}
			&\mathcal{C}_{c,\delta}:=\\
			&\left\{n\leq x:P^+(n)>x^{1-c}, P^+(n-1)>x^{1/2},P^+(n+1)>x^{1/2},x^{-\delta}<\frac{P^+(n-1)}{P^+(n+1)}<x^\delta\right\}
		\end{align*}
		satisfies
		\begin{align*}
			\mathcal{C}_{c,\delta}\leq\varepsilon cx.
		\end{align*}
	\end{proposition}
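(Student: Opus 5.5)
We count $n\le x$ of the form $n=mp$ with $p=P^+(n)>x^{1-c}$, so $m\le x^{c}$. We also write $n-1=l_1p_1$ and $n+1=l_2p_2$ with $p_1=P^+(n-1)>x^{1/2}$ and $p_2=P^+(n+1)>x^{1/2}$. The constraint $x^{-\delta}<p_1/p_2<x^{\delta}$ forces $l_1$ and $l_2$ to lie within a factor $x^{\delta}$ of each other, both of size at most $x^{1/2}$. The plan is to fix $m\le x^c$ and the pair $(l_1,l_2)$, and to count primes $p$ for which $mp$, $(mp-1)/l_1$ and $(mp+1)/l_2$ are simultaneously prime.

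\textbf{Step 1: the upper bound for fixed $(m,l_1,l_2)$.} This is a three-dimensional prime tuple problem in the variable $p$ (equivalently in $k$, after writing $p$ in a residue class modulo $l_1l_2$ determined by $mp\equiv 1 \pmod{l_1}$ and $mp\equiv -1\pmod{l_2}$). The three linear forms are $p$, $(mp-1)/l_1$ and $(mp+1)/l_2$, and $p$ ranges up to $x/m$. Lemma 2.1 with $g=3$ bounds the count by
\begin{align*}
\ll \mathfrak{S}(m,l_1,l_2)\,\frac{x}{m\,l_1l_2\,\log^3 x},
\end{align*}
since each form has size $x^{1/2+O(c+\delta)}$ or more, so each logarithm is $\asymp\log x$. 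Here $\mathfrak{S}$ is the singular series, which is $\ll$ a product of factors of the type $\prod_{p\mid ml_1l_2}(1+O(1/p))$. On average over $m,l_1,l_2$ this is $O(1)$ by the standard averaging of such Euler products.

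\textbf{Step 2: summing over the parameters.} We have $\sum_{m\le x^c}1/m\ll c\log x$. It remains to sum $1/(l_1l_2)$ over pairs with $l_1,l_2\le x^{1/2}$ and $|\log(l_1/l_2)|\le \delta\log x$ (up to $O(1)$). This sum is $\ll \delta\log^2 x$: sum over $l_1$, then over $l_2$ in a multiplicative window of width $x^{\delta}$ around $l_1$. Multiplying gives a total
\begin{align*}
\ll c\log x\cdot\delta\log^2x\cdot\frac{x}{\log^3x}=C\,c\,\delta\, x.
\end{align*}
This is the crucial gain: the window condition costs a factor $\delta$, while $m\le x^c$ costs a factor $c$.

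\textbf{Step 3: choosing parameters.} Choose $\delta$ with $C\delta<\varepsilon$. The role of $\eta$ is to guarantee that $c<\eta$ keeps the $O(c+\delta)$ perturbations in the sizes of the forms, and hence in the lower bounds on the logarithms, harmless. It also keeps the error term of Lemma 2.1, which is $O(\log\log x/\log x)$, uniformly small; the condition $y\le z$ there is satisfied with $y\asymp x/(ml_1l_2)$ after the change of variables. Note that $x/(ml_1l_2)\ge x^{c'}$ for some fixed $c'>0$ only when $l_1l_2$ is not too close to $x$. Since $l_1l_2\le x^{1+c}$ roughly, the range where $y$ is tiny needs care.

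\textbf{Main obstacle.} The main difficulty is the range where $p_1$ and $p_2$ are near $x^{1/2}$, so that $l_1l_2m\approx x$ and the sieve length $y$ becomes small. There the factor $\log^3 y$ is no longer comparable to $\log^3x$. I would handle this by splitting off the region $p_1,p_2\le x^{1/2+\delta'}$ and bounding it separately. One option is a two-dimensional version: drop one primality condition and count $p_1\in(x^{1/2},x^{1/2+\delta'}]$ directly, using Lemma 2.4-type estimates, which give a contribution $\ll \delta' c x$. Alternatively, use the Brun–Titchmarsh inequality in the form of Lemma 2.1 with $g=2$ applied to $p$ and one of the cofactors, which also yields a bound $\ll c\,\delta' x$. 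Taking $\delta'$ small then absorbs this region into $\varepsilon c x$.
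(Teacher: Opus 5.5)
Your decomposition is the paper's. Your $\delta'$ plays the role of the paper's $\eta$. The shell $\min(P^+(n-1),P^+(n+1))\le x^{1/2+\delta'}$ is its $\mathcal{C}_1\ll\eta cx$, and Steps 1--2 are its bound $\mathcal{C}_2\ll C_\eta\delta cx$. Your $g=2$ treatment of the shell works: since $n>x^{1-c}$, the shell forces $l_1\in[x^{1/2-c-\delta'},x^{1/2})$, which gives $\ll c(c+\delta')x\ll c\delta'x$ once $c<\delta'$. Your window sum $\sum 1/(l_1l_2)\ll\delta\log^2x$ is also correct. (In your list of forms that must be prime, $mp$ should read $p$.)

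\textbf{The gap: the constants and the order of choosing them.} This ordering is the real content of the proposition: $\delta$ may depend on $\eta$ but not on $c$. In Step 1, the logarithms in Lemma 2.1 are $\log y$, where $y\asymp x/(ml_1l_2)$ is the length of the $k$-interval. They are not the logarithms of the sizes of the forms. So the bound $x/(ml_1l_2\log^3x)$ with an absolute constant is false in general. Your remark about ``$O(c+\delta)$ perturbations in the sizes of the forms'' is beside the point, and it is not the role of $\eta$.

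On the region $\min(p_1,p_2)>x^{1/2+\delta'}$ you have $l_1,l_2\ll x^{1/2-\delta'}$. Hence $y\gg x^{2\delta'-c}\ge x^{\delta'}$ exactly when $c<\delta'$. Lemma 2.1 then gives $\ll\delta'^{-3}c\delta x$, so the constant $C$ in Step 3 is really $C_{\delta'}\asymp\delta'^{-3}$. Your recipe ``choose $\delta$ with $C\delta<\varepsilon$, then take $\delta'$ small to absorb the shell'' is therefore circular: shrinking $\delta'$ afterwards inflates $C$ and destroys $C\delta<\varepsilon$.

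The correct order is as follows.
\begin{enumerate}
\item Choose $\eta=\delta'$ in terms of $\varepsilon$ so that the shell contributes at most $\frac{\varepsilon}{2}cx$.
\item Require $c<\eta$. This is what keeps $y\ge x^{\eta}$ off the shell.
\item Choose $\delta\le\varepsilon\eta^3/C'$ with $C'$ absolute. Then $\delta=\delta(\eta,\varepsilon)$ is independent of $c$, as the statement requires.
\end{enumerate}
With this reordering your argument is the paper's proof.
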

	\begin{proof}
		The proof is similar to that of [3, Theorem 3]. 	Its proof requires involved sieve calculations (by Lemma 2.1 and Lemma 2.2) but presents no essential difficulties. We only present the proof sketch. Fix $\varepsilon$ and $\eta$. For any $0<c<\eta$, we can write
		\begin{align*}
			\mathcal{C}_{c,\delta}&\leq \sum_{\substack{n\leq x\\P^+(n)>x^{1-c}\\x^{1/2}<P^+(n-1)<x^{1/2+\eta}\ \text{or}\ x^{1/2}<P^+(n+1)<x^{1/2+\eta}}}1+\sum_{\substack{n\leq x\\P^+(n)>x^{1-c}\\\min(P^+(n-1),P^+(n+1))>x^{1/2+\eta}\\x^{-\delta}<P^+(n-1)/P^+(n+1)<x^\delta}}1\\&=\mathcal{C}_1+\mathcal{C}_2.
		\end{align*}
		For $\mathcal{C}_1$, we have
		\begin{align*}
			\mathcal{C}_1&\leq\sum_{\substack{n\leq x\\P^+(n)>x^{1-c}\\x^{1/2}<P^+(n-1)<x^{1/2+\eta}}}1+\sum_{\substack{n\leq x\\P^+(n)>x^{1-c}\\x^{1/2}<P^+(n+1)<x^{1/2+\eta}}}1\\&\ll \eta cx.
		\end{align*}
		For $\mathcal{C}_2$, we rewrite $n=lp$, $n-1=ap_1$ and $n+1=bp_2$, where $p>x^{1-c}$ and $\min(p_1,p_2)>x^{1/2+\eta}$. Then we have $abl\ll x^{1-2\eta+c}\ll x^{1-\eta}$. Thus
		\begin{align*}
			\mathcal{C}_2\ll C_\eta\delta cx,
		\end{align*}
		for a suitable constant $C_\eta$.
		Now we first choose a suitable fixed $\eta=\eta(\varepsilon)>0$. For any $0<c<\eta$, we then choose a fixed $\delta=\delta(\eta,\varepsilon)>0$ such that
		\begin{align*}
			\mathcal{C}_{c,\delta}\leq \varepsilon cx.
		\end{align*}
	\end{proof}
	\begin{remark}
		In the proposition above, our main requirement is that $\delta$ does not depend on $c$, or that $c$ does not depend on $\delta$.
	\end{remark}
	We now return to our proof. By Proposition 3.1 with $\varepsilon=1/4$, there exists $\eta>0$ and $\delta=\delta(\eta)>0$ such that for any $0<c<\eta$, the set $\mathcal{C}_\delta$ defined by
	\begin{align*}
		\#\mathcal{C}_{c,\delta}\leq\frac{cx}{4}\leq \frac{x}{4}\log\frac{1}{1-c}.
	\end{align*}
	Now we can write
	\begin{align*}
		\sum_{\substack{n\leq x\\P^+(n)>x^{1-c}\\P^+(n-1)>x^{1/2}}}1&= \sum_{\substack{n\leq x\\P^+(n)>x^{1-c}\\P^+(n-1)>x^{1/2}\\P^+(n-1)<P^+(n+1)}}1+\sum_{\substack{n\leq x\\P^+(n)>x^{1-c}\\P^+(n-1)>x^{1/2}\\P^+(n-1)>P^+(n+1)}}1\\&\leq \sum_{\substack{n\leq x\\P^+(n)>x^{1-c}\\P^+(n-1)<P^+(n+1)}}1+\sum_{\substack{n\leq x, n\notin\mathcal{C}_{c,\delta}\\P^+(n)>x^{1-c}\\P^+(n-1)>x^{1/2}\\P^+(n-1)>P^+(n+1)}}1+\#\mathcal{C}_{c,\delta}\\&:=\mathcal{B}_1+\mathcal{B}_2+\#\mathcal{C}_{c,\delta}.
	\end{align*}
	Then by Lemma 2.4, we have
	\begin{align*}
		\mathcal{B}_1+\mathcal{B}_2\geq \sum_{\substack{n\leq x\\P^+(n)>x^{1-c}\\P^+(n-1)>x^{1/2}}}1-\#\mathcal{C}_{c,\delta}\geq \frac{x}{4}\log\frac{1}{1-c}.\tag{3.1}
	\end{align*}
	We now proceed to prove that $\mathcal{B}_2 \ll \mathcal{B}_1$. We define
	\begin{align*}
		U_n:=1_{P^+(n-1)>x^{1/2}}\log P^+(n-1),\quad V_n:=1_{P^+(n+1)>x^{1/2}}\log P^+(n+1).
	\end{align*}
	We also define
	\begin{align*}
		&P_c:=\sum_{\substack{n\leq x\\P^+(n)>x^{1-c}\\P^+(n-1)>P^+(n+1)}}(U_n-V_n),\\&Q_c:=\sum_{\substack{n\leq x\\P^+(n)>x^{1-c}\\P^+(n-1)<P^+(n+1)}}(V_n-U_n).
	\end{align*}
	It is not difficult to see (also by Lemma 2.4)
	\begin{align*}
		&P_c-Q_c=\sum_{\substack{n\leq x\\P^+(n)>x^{1-c}}}(U_n-V_n)=o(x),\\
		&Q_c\leq \log (x+1)\mathcal{B}_1.
	\end{align*}
	For $P_c$, we have
	\begin{align*}
		P_c\geq \sum_{\substack{n\leq x,n\notin\mathcal{C}_{c,\delta}\\P^+(n)>x^{1-c}\\P^+(n-1)>P^+(n+1)}}(U_n-V_n)\geq \delta\log x\mathcal{B}_2.
	\end{align*}
	Thus 
	\begin{align*}
		\mathcal{B}_2\leq \frac{1}{\delta}\mathcal{B}_1+o(x).\tag{3.2}
	\end{align*}
	By (3.1), we have
	\begin{align*}
		\frac{1+\delta}{\delta}\mathcal{B}_1+o(x)\geq \mathcal{B}_1+\mathcal{B}_2\geq  \frac{x}{4}\log\frac{1}{1-c},
	\end{align*}
	i.e.,
	\begin{align*}
		\sum_{\substack{n\leq x\\P^+(n)>x^{1-c}\\P^+(n-1)<P^+(n+1)}}1\geq x\frac{\delta}{4(1+\delta)}\log\frac{1}{1-c}.
	\end{align*}
	For the convenience of our subsequent discussion, we summarize the above as follows: There exists $\eta>0$ and $C_1=C_1(\eta)>0$ such that for any $0<c<\eta$ one has
	\begin{align*}
		\sum_{\substack{n\leq x\\P^+(n)>x^{1-c}\\P^+(n-1)<P^+(n+1)}}1\geq xC_1\log\frac{1}{1-c}.\tag{3.3}
	\end{align*}
	\subsection{Proof of Theorem 1.3}
	If $P^+(n+1)>x^{1-c}$, we have the following fact that (by Lemma 2.2, also see [16, p.369-373])
	\begin{align*}
		\sum_{\substack{n\leq x\\\min(P^+(n),P^+(n+1))>x^{1-c}}}1\leq  C_2c^2x.
	\end{align*}
	Thus we get
	\begin{align*}
		\sum_{\substack{n\leq x\\P^+(n-1)<P^+(n+1)<P^+(n)}}1&\geq \sum_{\substack{n\leq x\\P^+(n)>x^{1-c}\\P^+(n-1)<P^+(n+1)}}1-\sum_{\substack{n\leq x\\\min(P^+(n),P^+(n+1))>x^{1-c}}}1\\&\geq \left(C_1\log\frac{1}{1-c}-C_2c^2\right)x.\tag{3.4}
	\end{align*}
	Let $g(c):=-C_1\log(1-c)-C_2c^2$. We have $g(0)=0$ and $g'(0)=C_1> 0$. Then there exists $c_0\in(0.\eta)$ such that $g(c_0)>0$, i.e.,
	\begin{align*}
		\sum_{\substack{n\leq x\\P^+(n-1)<P^+(n+1)<P^+(n)}}1\gg x.
	\end{align*}
	Similarly, we also have
	\begin{align*}
		\sum_{\substack{n\leq x\\P^+(n+1)>P^+(n-1)<P^+(n)}}\gg x.
	\end{align*}
	If we add a further series, we get
	\begin{align*}
		\sum_{\substack{n\leq x\\P^+(n-1)<P^+(n+1)<P^+(n)>\max_{i}P^+(n+a_i)}}1\geq& \sum_{\substack{n\leq x\\P^+(n)>x^{1-c}\\P^+(n-1)<P^+(n+1)}}1-\sum_{\substack{n\leq x\\\min(P^+(n),P^+(n+1))>x^{1-c}}}1\\&-\sum_i\sum_{\substack{n\leq x\\\min(P^+(n),P^+(n+a_i))>x^{1-c}}}1\\\geq& \left(C_1\log\frac{1}{1-c}-(i+1)C_2c^2\right)x,\tag{3.5}
	\end{align*}
	where the $a_i$ are at most finite in number, with $|a_i| = O(1)$.
	Let $g_i(c):=-C_1\log(1-c)-(i+1)C_2c^2$. We have $g_i(0)=0$ and $g_i'(0)=C_1> 0$. Then there exists $c_{i0}\in(0,\eta)$ such that $g_i(c_{i0})>0$, i.e.,
	\begin{align*}
		\sum_{\substack{n\leq x\\P^+(n-1)<P^+(n+1)<P^+(n)>\max_{i}P^+(n+a_i)}}1\gg x.\tag{3.6}
	\end{align*}
	Similarly, we also have
	\begin{align*}
		\sum_{\substack{n\leq x\\P^+(n+1)>P^+(n-1)<P^+(n)>\max_{i}P^+(n+a_i)}}1\gg x.\tag{3.7}
	\end{align*}
	By adjusting the constant appropriately, we complete the proof.
	\begin{remark}
		In this section, we replace integers with large prime factors by smooth numbers. Similar to Proposition 3.1, we have the following result. For every $\varepsilon>0$, for any $0<1/u<1/C$ with the $C$ in Lemma 2.7, there exists $\delta=\delta(u,\varepsilon)>0$ such that the set $\mathcal{C}_{u,\delta}$ defined by
			\begin{align*}
				&\mathcal{C}_{u,\delta}':=\\
				&\#\left\{n\leq x:P^+(n)<x^{1/u}, P^+(n-1)>x^{1/2},P^+(n+1)>x^{1/2},x^{-\delta}<\frac{P^+(n-1)}{P^+(n+1)}<x^\delta\right\}
			\end{align*}
			satisfies
			\begin{align*}
				\mathcal{C}_{u,\delta}'\leq\varepsilon \rho(u)x.
			\end{align*}
		Unfortunately, it seems we cannot make $\delta$ independent of the choice of $u$. Thus we seem unable to obtain
		\begin{align*}
			\sum_{\substack{n\leq x\\P^+(n)>P^+(n+1)<P^+(n+2)<P^+(n+3)}}1\gg x.
		\end{align*}
	\end{remark}
	
	\section{Proof of Theorem 1.5}
	We only present the proof for the first case, and the remaining cases may be analyzed by similar arguments.
	For any $0<\alpha<1/10<9/10<\beta<1$, we have
	\begin{align*}
		&\sum_{\substack{n\leq x\\P^+(n)>P^+(n+1)<P^+(n+2)<P^+(n+3)>P^+(n+4)}}1\\&\geq \sum_{\substack{n\leq x\\P^+(n+1)\leq x^\alpha, P^+(n+3)>x^{\beta}}}1-\sum_{\substack{n\leq x\\P^+(n+1)\leq x^\alpha, P^+(n+3)>x^{\beta}\\P^+(n+2)\geq x^\beta\ \text{or}\ P^+(n+4)\geq x^\beta}}1-\sum_{\substack{n\leq x\\\\P^+(n+1)\leq x^\alpha, P^+(n+3)>x^{\beta}\\P^+(n)\leq x^\alpha\ \text{or}\ P^+(n+2)\leq x^\alpha}}1\\&:=\mathcal{S}_1-\mathcal{S}_2-\mathcal{S}_3.\tag{4.1}
	\end{align*}
	For the first summation, under the Elliott-Halberstam conjecture for friable integers, we have (we can see it in [17])
	\begin{align*}
		\mathcal{S}_1\sim \rho\left(\frac{1}{\alpha}\right)\left(1-\rho\left(\frac{1}{\beta}\right)\right)x.\tag{4.2}
	\end{align*}
	For the second summation, we have
	\begin{align*}
		\mathcal{S}_2\leq \sum_{\substack{n\leq x\\P^+(n+1)\leq x^\alpha, P^+(n+3)>x^{\beta}\\P^+(n+2)\geq x^\beta}}1+\sum_{\substack{n\leq x\\P^+(n+1)\leq x^\alpha, P^+(n+3)>x^{\beta}\\P^+(n+4)\geq x^\beta}}1.
	\end{align*}
	We rewrite $n+3=ap_1$ and $n+2=bp_2$ with $\min(p_1,p_2)>x^\beta$. Then we have (by Lemma 2.3. Also see [11, proof of Theorem 1.9] for the one-dimensional case; in fact, we may simplify its proof.)
	\begin{align*}
		\sum_{\substack{n\leq x\\P^+(n+1)\leq x^\alpha, P^+(n+3)>x^{\beta}\\P^+(n+2)\geq x^\beta}}1&\leq \sum_{1\leq a,b<x^{1-\beta}}\sum_{\substack{n\leq x,P^+(n)\leq x^\alpha\\a|(n+2),b|(n+1)\\(n+1)/b,(n+2)/a \ \text{are primes}}}1+o(x)\\&\ll (1-\beta)^2\rho\left(\frac{1}{\alpha}\right)x+o(x),
	\end{align*}
	then
	\begin{align*}
		\mathcal{S}_2\ll (1-\beta)^2\rho\left(\frac{1}{\alpha}\right)x+o(x)\ll \rho\left(\frac{1}{\alpha}\right)\left(1-\rho\left(\frac{1}{\beta}\right)\right)(1-\beta)x+o(x).\tag{4.3}
	\end{align*}
	For the third summation, we have
	\begin{align*}
		\mathcal{S}_3\leq \sum_{\substack{n\leq x\\P^+(n+1)\leq x^\alpha, P^+(n+3)>x^{\beta}\\P^+(n)\leq x^\alpha}}1+\sum_{\substack{n\leq x\\P^+(n+1)\leq x^\alpha, P^+(n+3)>x^{\beta}\\P^+(n+2)\leq x^\alpha}}1.
	\end{align*}
	Note that
	\begin{align*}
		\log n=\sum_{m|n}\Lambda(m).
	\end{align*}
	Consider $x/\log x<n\leq x$. Then we have
	\begin{align*}
		&\sum_{\substack{n\leq x\\P^+(n+1)\leq x^\alpha, P^+(n+3)>x^{\beta}\\P^+(n)\leq x^\alpha}}1\\&=\frac{1}{\log x}\sum_{\substack{n\leq x\\P^+(n+1)\leq x^\alpha, P^+(n+3)>x^{\beta}\\P^+(n)\leq x^\alpha}}\log n+o(x)\\&=\frac{1}{\log x}\sum_{p'\leq x^\alpha}\log p'\sum_{\substack{n\leq x\\P^+(n+1)\leq x^\alpha, P^+(n+3)>x^{\beta}\\p'|n}}1+O\Bigg(\frac{1}{\log x}\sum_{\substack{p',k\\p'^k\leq x,k\geq 2}}\log p'\sum_{\substack{n\leq x\\p'^k|n}}1\Bigg)+o(x).
	\end{align*}
	For the error term, we have
	\begin{align*}
		\frac{1}{\log x}\sum_{\substack{p',k\\p'^k\leq x,k\geq 2}}\log p'\sum_{\substack{n\leq x\\p'^k|n}}1\ll \frac{x}{\log x}\sum_{p'\leq x^{1/2}}\log p'\sum_{k\geq 2}\frac{1}{p'^k}\ll \frac{x}{\log x}\sum_{p'\leq x^{1/2}}\frac{\log p'}{p'^2}=O\left(\frac{x}{\log x}\right).
	\end{align*}
	Write $n+3=ap_1$ with $p_1>x^\beta$. For the main term, we have (by Lemma 2.2)
	\begin{align*}
		&\frac{1}{\log x}\sum_{p'\leq x^\alpha}\log p'\sum_{\substack{n\leq x\\P^+(n+1)\leq x^\alpha, P^+(n+3)>x^{\beta}\\p'|n}}1\\&\ll\frac{1}{\log x}\sum_{p'\leq x^\alpha}\log p'\sum_{a<x^{1-\beta}}\sum_{\substack{n\leq x,P^+(n)\leq x^\alpha\\ p'|(n-1), a|(n+2)\\(n+2)/a\ \text{is prime}}}1\\&\ll \alpha(1-\beta)\rho\left(\frac{1}{\alpha}\right)x+o(x),
	\end{align*}
	then
	\begin{align*}
		\mathcal{S}_3\ll \alpha(1-\beta)\rho\left(\frac{1}{\alpha}\right)x+o(x)\ll \alpha\rho\left(\frac{1}{\alpha}\right)\left(1-\rho\left(\frac{1}{\beta}\right)\right)x+o(x).\tag{4.4}
	\end{align*}
	To summarize the above, for sufficiently large $C>0$, we have
	\begin{align*}
		\sum_{\substack{n\leq x\\P^+(n)>P^+(n+1)<P^+(n+2)<P^+(n+3)>P^+(n+4)}}1\geq g(\alpha,\beta)x+o(x),
	\end{align*}
	where $0<\alpha<1/10<9/10<\beta<1$,
	\begin{align*}
		g(\alpha,\beta)=\rho\left(\frac{1}{\alpha}\right)\left(1-\rho\left(\frac{1}{\beta}\right)\right)\Bigg(1-C(1-\beta)-C\alpha\Bigg).
	\end{align*}
	Let $\alpha:=1/(3C)$ and $\beta:=1-1/(3C)$. Then we have $g(\alpha,\beta)>0$, i.e., 
	\begin{align*}
		\sum_{\substack{n\leq x\\P^+(n)>P^+(n+1)<P^+(n+2)<P^+(n+3)>P^+(n+4)}}1\gg x.\tag{4.5}
	\end{align*}
	
	\section{Proof of Theorem 1.7}
	We only show the first case: $1< P^+(n+1)/P^+(n)\leq1+\varepsilon$. Let $\delta=\varepsilon/2$.
	Write $n=ap_1$ and $n+1=bp_2$, with $P<p_1\leq (1+\delta)P$ and $(1+\delta)P<p_2<(1+2\delta)P$.
	Then we have
	\begin{align*}
		1<\frac{p_2}{p_1}<1+2\delta.
	\end{align*}
	Since $p_1$ and $p_2$ are not necessarily the corresponding largest prime factors, we need to remove some terms. We have
	\begin{align*}
		\sum_{\substack{n\leq x\\1<P^+(n+1)/P^+(n)<1+2\delta}}&1\geq \sum_{P}\Bigg(\sum_{(1+\delta)P<p_2<(1+2\delta)P}\sum_{P<p_1\leq (1+\delta)P}\sum_{\substack{a\leq x/p_1\\ap_1\equiv -1\mkern-15mu\pmod{p_2}}}1\\&-\sum_{P<p_1\leq (1+\delta)P}\sum_{(1+\delta)P<p_2<(1+2\delta)P}\Bigg(\sum_{\substack{a\leq x/p_1\\P^+(a)>P\\ap_1\equiv -1\mkern-15mu\pmod{p_2}}}1+\sum_{\substack{b\leq x/p_2\\P^+(b)>(1+\delta)P\\bp_2\equiv 1\mkern-15mu\pmod{p_1}}}1\Bigg)\Bigg)\\=&\sum_{P}(S_1(P)-S_2(P)),\tag{5.1}
	\end{align*}
	where $\sum_P$ is taken over all  $P=x^{1/2-\delta}(1+\delta)^{-i}$, $1\leq i\leq \delta\log x/\log (1+\delta)\asymp \log x$. We have 
	\begin{align*}
		S_1(P)&=\sum_{(1+\delta)P<p_2<(1+2\delta)P}\sum_{P<p_1\leq (1+\delta)P}\frac{x}{p_1p_2}+o(x^{1-\delta})\\&\geq C_1\delta^2\frac{x}{(\log x)^2}+o(x^{1-\delta}),\tag{5.2}
	\end{align*}
	since
	\begin{align*}
		&\sum_{(1+\delta)P<p_2<(1+2\delta)P}\sum_{P<p_1\leq (1+\delta)P}\frac{x}{p_1p_2}\\&\gg\frac{x}{P^2}\left(\pi((1+2\delta)P)-\pi((1+\delta)P)\right)\left(\pi((1+\delta)P)-\pi(P)\right)\\&\gg \delta^2\frac{x}{(\log x)^2}.
	\end{align*}
	For $S_2$, by writing $a=a_0p_0$ with $p_0>P$, we have
	\begin{align*}
		&\sum_{P<p_1\leq (1+\delta)P}\sum_{(1+\delta)P<p_2<(1+2\delta)P}\sum_{\substack{a\leq x/p_1\\P^+(a)>P\\ap_1\equiv -1\mkern-15mu\pmod{p_2}}}1\\&=\sum_{(1+\delta)P<p_2<(1+2\delta)P}\frac{1}{\varphi(p_2)}\sum_{P<p_1\leq (1+\delta)P}\sum_{\substack{a\leq x/p_1\\P^+(a)>P\\(ap_1,p_2)=1}}1\\&\quad+\sum_{(1+\delta)P<p_2<(1+2\delta)P}\Bigg(\sum_{P<p_1\leq (1+\delta)P}\sum_{\substack{a\leq x/p_1\\P^+(a)>P\\ap_1\equiv -1\mkern-15mu\pmod{p_2}}}1-\frac{1}{\varphi(p_2)}\sum_{P<p_1\leq (1+\delta)P}\sum_{\substack{a\leq x/p_1\\P^+(a)>P\\(ap_1,p_2)=1}}1\Bigg).
	\end{align*}
	For the remainder term, by Lemma 2.6, the contribution is negligible. (To do this, we first put $a_0,p_0,p_1$ in the dynamic ranges $[\mathscr{A}_0,(1+\Delta)\mathscr{A}_0]$, $[\mathscr{P}_0,(1+\Delta)\mathscr{P}_0]$, $[\mathscr{P}_1,(1+\Delta)\mathscr{P}_1]$ respectively, where $\Delta=(\log x)^{-A_0}$ for a suitable constant $A_0$. Then let $q\leftarrow p_2$, $m\leftarrow a_0p_1$, $n\leftarrow p_0$, $\beta_n\leftarrow\mathbf{1}_{p_0\in[\mathscr{P}_0,(1+\Delta)\mathscr{P}_0]}$ and $\alpha_m\leftarrow1_{a_0\in[\mathscr{A}_0,(1+\Delta)\mathscr{A}_0]}1_{p_1\in(\mathscr{P}_1,(1+\Delta)\mathscr{P}_1]}$.) For the main term, we have
	\begin{align*}
		&\sum_{(1+\delta)P<p_2<(1+2\delta)P}\frac{1}{\varphi(p_2)}\sum_{P<p_1\leq (1+\delta)P}\sum_{\substack{a\leq x/p_1\\P^+(a)>P\\(ap_1,p_2)=1}}1\\&=\sum_{(1+\delta)P<p_2<(1+2\delta)P}\frac{1}{\varphi(p_2)}\sum_{P<p_1\leq (1+\delta)P}\sum_{P<p_0\leq x/p_1}\frac{x}{p_0p_1}\\&\ll \delta\frac{x}{P^2}\left(\pi((1+2\delta)P)-\pi((1+\delta)P)\right)\left(\pi((1+\delta)P)-\pi(P)\right)\\&\ll \delta^3\frac{x}{(\log x)^2}.
	\end{align*}
	Thus we have 
	\begin{align*}
		S_2(P)\leq C_2\delta^3\frac{x}{(\log x)^2}+O\left(\frac{x}{(\log x)^{10}}\right).\tag{5.3}
	\end{align*}
	Returning to (5.1), we have
	\begin{align*}
		\sum_{\substack{n\leq x\\1<P^+(n+1)/P^+(n)<1+2\delta}}1\geq \delta^2\left(C_1-C_2\delta\right)\frac{x}{\log x}+O\left(\frac{x}{(\log x)^{9}}\right).\tag{5.4}
	\end{align*}
	Considering $\delta<C_1/(2C_2)$, we complete the proof.
	
	\begin{remark}
		For the upper bound, we use similar arguments for $\max(P^+(n),P^+(n+1))<x^{1/2}$, and use sieve method for $\min(P^+(n),P^+(n+1))>x^{1/2+\varepsilon}$. The remaining ranges appear to be crucial for the estimate.
	\end{remark}

	\section*{Acknowledgements}
	The authors thank Professor Zhiwei Wang for exchanging some information.

\end{document}